\documentclass[12pt]{amsart}
\usepackage[headings]{fullpage}
\usepackage{amssymb,amsmath,amscd,bbm,tikz}
\usepackage{epic,eepic,epsfig}
\usepackage[all]{xy}
\usepackage{url}
\usepackage[bookmarks=true,%
    colorlinks=true,%
    linkcolor=blue,%
    citecolor=blue,%
    filecolor=blue,%
    menucolor=blue,%
    urlcolor=blue,%
    breaklinks=true]{hyperref}
\usepackage{slashed}    

\newtheorem{theorem}{Theorem}[section]
\theoremstyle{definition}

\newtheorem{lemma}[theorem]{Lemma}

\newtheorem{remark}[theorem]{Remark}
\newtheorem{corollary}[theorem]{Corollary}

\def\BN{\mathbbm N}
\def\BZ{\mathbbm Z}
\def\BQ{\mathbbm Q}
\def\BR{\mathbbm R}
\def\BC{\mathbbm C}

\def\calI{\mathcal I}

\def\calC{\mathcal C}

\def\tq{\tilde{q}}

\def\Li{\mathrm{Li}}
\def\PSL{\mathrm{PSL}}
\def\th{\theta}
\def\calS{\mathcal{S}}

\newcommand{\im}{\mathsf{i}}
\newcommand{\fad}[1]{\operatorname{\Phi}_{#1}}

\begin{document}
\title[Evaluation of state integrals at rational points]{Evaluation of state 
integrals at rational points}
\author{Stavros Garoufalidis}
\address{School of Mathematics \\
         Georgia Institute of Technology \\
         Atlanta, GA 30332-0160, USA \newline
         {\tt \url{http://www.math.gatech.edu/~stavros}}}
\email{stavros@math.gatech.edu}
\author{Rinat Kashaev}
\address{Section de Math\'ematiques, Universit\'e de Gen\`eve \\
2-4 rue du Li\`evre, Case Postale 64, 1211 Gen\`eve 4, Switzerland \newline
         {\tt \url{http://www.unige.ch/math/folks/kashaev}}}
\email{Rinat.Kashaev@unige.ch}
\thanks{
S.G. was supported in part by grant DMS-0805078 of the US National Science 
Foundation. R.K. was supported in part by the Swiss National Science Foundation.
\\
{\em Key words and phrases:}
state-integrals, $q$-series, quantum dilogarithm, cyclic dilogarithm, 
Rogers dilogarithm, quasi-periodic functions,
Nahm equation, gluing equations, $4_1$, $5_2$, $(-2,3,7)$ pretzel knot.
}

\date{April 27, 2015}

\begin{abstract}
Multi-dimensional state-integrals of products of Faddeev's quantum 
dilogarithms arise frequently in Quantum Topology, quantum Teichm\"uller theory
and complex Chern--Simons theory. Using the quasi-periodicity property 
of the quantum dilogarithm, we evaluate 1-dimensional state-integrals at 
rational points and express the answer in terms of the Rogers dilogarithm,
the cyclic (quantum) dilogarithm and finite state-sums at roots of unity. 
We illustrate our results with the evaluation of the state-integrals 
of the $4_1$, $5_2$ and $(-2,3,7)$ pretzel knots at rational points.
\end{abstract}

\maketitle

\tableofcontents


\section{Introduction}
\label{sec.intro}

\subsection{State-integrals and their $q$-series}
\label{sub.intro}

State-integrals are multi-dimensional integrals of products of 
Faddeev's quantum dilogarithms. They appear in abundance in Quantum Topology,
quantum Teichm\"uller theory and in complex Chern--Simons theory. State
integrals were studied among others by 
Hikami~\cite{Hi}, Dimofte--Gukov--Lennels--Zagier~\cite{DGLZ}, 
Andersen--Kashaev~\cite{AK,AK:complex}, Kashaev--Luo--Vartanov \cite{KLV},
Dimofte~\cite{Dimofte:complex} and Dimofte--Garoufalidis~\cite{DG2}. 

In our previous paper~\cite{state-integrals}, we showed how to express 
1-dimensional state-integrals as a finite sum of products of $q$-series and 
$\tq$-series with integer coefficients, where the variables $q$ and $\tq$ are 
related by the modular transformation: $q=e^{2\pi\im\tau}$ and 
$\tq=e^{-2\pi\im/\tau}$.

In this paper we evaluate 1-dimensional state-integrals at rational points 
in terms of the Rogers dilogarithm, the cyclic (quantum) dilogarithm and 
truncated state-sums at roots of unity. Our formulas are syntactically 
similar with 
\begin{itemize}
\item[(a)]
the constant terms of the power series that appear in 
the Quantum Modularity Conjecture of Zagier~\cite{Za:QMF,GZ1},
\item[(b)]
the $1$-loop terms of the perturbation expansion of complex Chern--Simons
theory \cite{DG2},
\item[(c)]
the state-sums of quantum Teichm\"uller theory~\cite{Kashaev:6j,K95,K97} and 
also~\cite[Sec.6]{BB}.
\end{itemize} 
This is not a coincidence; it is one part of a story discussed
in detail in~\cite{GZ2}.

In order to keep our 
principle clear, we focus exclusively on 1-dimensional state-integrals, and we illustrate our results for the state-integrals of
$4_1$, $5_2$ and $(-2,3,7)$ pretzel knots. In a separate publication we will
discuss the evaluation of multi-dimensional state-integrals.

A 1-dimensional state-integral is an absolutely convergent integral of 
the form
\begin{equation}
\label{eq.IAB}
\calI_{A,B}(\mathsf{b})=
\int_{\BR + \im \epsilon} \fad{\mathsf{b}}(x)^B e^{-A \pi \im x^2} dx
\end{equation}
for a complex number $\mathsf{b}$ with $\mathsf{b}^2 \not\in \BR_{\leq 0}$. 
Here $A, B$ are natural
numbers satisfying $B > A > 0$ and $\fad{\mathsf{b}}(x)$ is Faddeev's quantum 
dilogarithm function~\cite{Faddeev,FK-QDL}. Few properties of this special
function are reviewed in Appendix \ref{sec.QDL}.

A numerical computation by the first author and Zagier~\cite{GZ2}
suggested the following formula for $\calI_{1,2}(1)$:
\begin{equation}
\label{eq.I211}
\calI_{1,2}(1)= \frac{e^{\pi\im/6}}{\sqrt{3}}
\left(e^{\frac{V}{2\pi}}-e^{-\frac{V}{2\pi}}\right)
\end{equation}
(and more generally for the Taylor coefficients of the analytic function 
$\calI_{1,2}(\mathsf{b})$ at $\mathsf{b}=1$), 
where $V=2\,\mathrm{Im}(\Li_2(e^{\pi \im/3}))=2.0298832\ldots$ is the 
volume of the $4_1$ knot. Understanding and proving the above identity led 
to the results of our paper. 

Our aim is to evaluate $\calI_{A,B}(\mathsf{b})$ when 
$\mathsf{b}^2=M/N$ for a pair of coprime natural numbers $M,N$. 
The content of our paper can be summarized in a diagram

$$
\begin{CD}
\{\text{state-integrals}\} @>>> \{\text{Nahm series}\} \\
@VVV                              @VVV   \\
\{\text{evaluations}\} @<<< \{\text{truncated Nahm series}\}
\end{CD}
$$
The top arrow was the content of our previous article~\cite{state-integrals}.
To recall the connection between state-integrals and $q$-series,
consider the integrand of the state-integral $\calI_{A,B}(\mathsf{b})$, 
shifted by $c_{\mathsf{b}}=\im (\mathsf{b}+\mathsf{b}^{-1})/2$:
\begin{equation}
\label{eq.fc}
\operatorname{f}(x-c_{\mathsf{b}})= \fad{\mathsf{b}}(x)^B e^{-A \pi \im x^2} \,.
\end{equation}
The quasi-periodicity of the quantum dilogarithm (see 
Equations~\eqref{eq.bshift}--\eqref{eq.tbshift}) implies that
$$
\operatorname{f}(x+ \im m \mathsf{b} + \im n \mathsf{b}^{-1})=
\operatorname{f}(x) \operatorname{g}^+_m(e^{2 \pi \mathsf{b} x},q_+) 
\operatorname{g}^-_n(e^{2 \pi \mathsf{b}^{-1} x},q_-)
$$
where $q_\pm=e^{2 \pi \im \mathsf{b}^{\pm 2}}$ and
\begin{equation}
\label{eq.gpm}
\operatorname{g}^\pm_k(x,q)= (-x)^{A k}\frac{q^{\frac{A}{2}k(k+1)}}{(qx;q)_k^B} \,.
\end{equation}
This gives rise to the series $G^\pm(x,q) \in \BZ[\![x,q]\!]$ 
defined by
\begin{align}
G^\pm(x,q)&=\sum_{k=0}^\infty \operatorname{g}^\pm_k(x,q) 
\end{align}
The $q$-series $G^{\pm}(1,q) \in \BZ[\![q]\!]$ are special $q$-hypergeometric
series of Nahm type and appear in the expression of the 
state-integral $\calI_{A,B}(b)$ as a sum of products of $q$-series and 
$\tq$-series, where $\tq=1/q_-$, see~\cite[Thm.1.1]{state-integrals}.

Throughout the paper, $(M,N)$ will denote an {\em admissible} pair, i.e.,
a pair of coprime positive integers. Consider the state-sum defined by
\begin{align}
G_{M,N}(x_+,x_-)&=\sum_{k=0}^{MN-1} \operatorname{g}^+_{kP}(x_+,\zeta_N^M)
\operatorname{g}^-_{kQ}(x_-,\zeta_M^N)
\end{align}
where $P,Q$ are integers that satisfy the equation $MP+NQ=1$
and $\zeta_N=e^{2 \pi i/N}$. When $x_+^N=x_-^M$, it follows from
Lemma~\ref{lem.gg} that $G_{M,N}(x_+,x_-)$ is independent of the choice of 
$P$ and $Q$. Observe that
$$
G_{1,N}(x_+,x_-)=G^+_N(x_+), \qquad G_{M,1}(x_+,x_-)=G^-_M(x_-) 
$$
where
\begin{align}
\label{eq.GN}
G^\pm_N(x)&=\sum_{k=0}^{N-1} \operatorname{g}^\pm_k(x,\zeta_N) \,.
\end{align}

\subsection{The Rogers and the cyclic dilogarithms}
\label{sub.dilog}

Recall the {\em Rogers dilogarithm}~\cite{Neumann:CCS,Goette}
\begin{equation}
\label{eq.rogers}
\operatorname{R}(z)=
\operatorname{Li_2}(z)+\frac12 \log(z)\log(1-z)-\frac{\pi^2}{6}
\end{equation}
and its extension as a multivalued function on the universal abelian cover
of $\BC\setminus\{0,1\}$.

The {\em cyclic (quantum) dilogarithm} $\operatorname{D}_N(x;q)$ is the 
$N$-th root of a polynomial in $x$ with constant term $1$ defined by
\begin{equation}
\label{eq.DN}
\operatorname{D}_N(x;q)=\prod_{k=1}^{N-1} (1-q^k x)^{k/N} \,.
\end{equation}
It appeared in~\cite[Eqn.C.3]{Kashaev:star} and 
\cite[Eqn.2.30]{Kashaev:quantum.hyper}, and its $N$-th power
is characterized among polynomials by the functional equation
$$
\frac{\operatorname{D}_N(\zeta_N x;\zeta_N)^N}{\operatorname{D}_N(x;\zeta_N)^N}
= \frac{(1-x)^N}{1-x^N}, \qquad \operatorname{D}_N(0)^N=1.
$$
It will be useful to introduce the following variant 
$\slashed{\operatorname{D}}_N$ defined by
\begin{equation}
\slashed{\operatorname{D}}_N(x;q)
=\prod_{k=1}^{N}(1-xq^k)^{k/N}=(1-xq^N)\operatorname{D}_N(x;q) \,.
\end{equation}

\subsection{Evaluation of state-integrals}
\label{sub.evaluation}

Our main theorem evaluates the state-integral at $\mathsf{b}^2=M/N$ 
in terms of the state-sums $G_{M,N}$, the Rogers dilogarithm and the
cyclic dilogarithm. 

Fix an admissible pair $(M,N)$, and define
\begin{equation}
\label{eq.bs}
\mathsf{b}= \sqrt{M/N}, \qquad \mathsf{s}= \sqrt{MN} \,.
\end{equation}
Let 
\begin{equation}
\label{eq.calS}
\calS =\{ w \,\, | \,\, 
\operatorname{g}(e^{2\pi\mathsf{s}w})=1, 
\,\, 0 < \mathsf{s}\, \mathrm{Im}(w)-\lambda < 1 \},
\end{equation}
where $\lambda$ is a generic real number such that 
\begin{equation}
\label{eq.lambda}
-(M+N)/2 < \lambda < 0
\end{equation}
and
\begin{equation}
\operatorname{g}(z)=
(-z)^{A} (1-z)^{-B} \in \BQ[z^{\pm 1}] \,.
\end{equation}
Note that if $w \in\calS$, then $e^{2\pi\mathsf{s}w}$ 
is an algebraic number with a fixed choice of $N$ and $M$-th roots. 

\begin{theorem}
\label{thm.1}
When $\mathsf{b}^2=M/N$ we have:
\begin{align}
\label{eq.Ib}
\calI_{A,B}(\mathsf{b}) &=
e^{\pi\im \frac{B+3A(M+N+1)^2-6 M N}{12 M N}} 
\mathsf{s}^{-1}
\sum_{w \in \calS}
\frac{e^{\frac{\im B}{2\pi \mathsf{s}^2}\operatorname{R}(z)} 
(1-z)^{\frac{(2N+1)(2M+1)}{4 M N}B}
}{z \operatorname{g}'(z) \slashed{\operatorname{D}}_N(\th_+,q_+)^{B} 
\slashed{\operatorname{D}}_M(\th_-,q_-)^{B}}  
G_{M,N}(\th_+,\th_-) \,,
\end{align}
where
\begin{equation}
\label{eq.conventions}
z=e^{2\pi\mathsf{s}w}, \quad
\th_+ =e^{2\pi\mathsf{b}w}=z^{1/N}, \quad \th_- =
e^{2\pi\mathsf{b}^{-1}w}=z^{1/M}, \quad 
q_+ = \zeta_N^M, \quad q_-=\zeta_M^N \,.
\end{equation}
\end{theorem}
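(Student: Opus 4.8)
The plan is to start from the state-integral $\calI_{A,B}(\mathsf{b})$ and deform the contour of integration $\BR+\im\epsilon$ so as to pick up residues of the integrand. Recall that Faddeev's quantum dilogarithm $\fad{\mathsf{b}}$ is meromorphic with poles at the points $c_{\mathsf{b}}+\im(m\mathsf{b}+n\mathsf{b}^{-1})$ for $m,n\ge 0$, so $\operatorname{f}(x-c_{\mathsf{b}})=\fad{\mathsf{b}}(x)^B e^{-A\pi\im x^2}$ has poles of order $B$ at these points. At a rational point $\mathsf{b}^2=M/N$ the lattice $\im(\mathbbm Z\mathsf{b}+\mathbbm Z\mathsf{b}^{-1})$ degenerates to $\frac{\im}{\mathsf{s}}\mathbbm Z$ (since $\mathsf{b}=\sqrt{M/N}$, $\mathsf{b}^{-1}=\sqrt{N/M}$, and $\mathsf{s}=\sqrt{MN}$), so the poles collapse and collide. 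The key tool is the quasi-periodicity relation quoted in the excerpt, $\operatorname{f}(x+\im m\mathsf{b}+\im n\mathsf{b}^{-1})=\operatorname{f}(x)\operatorname{g}^+_m(e^{2\pi\mathsf{b}x},q_+)\operatorname{g}^-_n(e^{2\pi\mathsf{b}^{-1}x},q_-)$, which at $q_\pm$ a root of unity becomes genuinely periodic in a suitable sense and lets one resum the contributions of colliding poles into the finite state-sum $G_{M,N}$.

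The main steps, in order, would be: (1) Write $\calI_{A,B}(\mathsf{b})$ as an integral over $\BR+\im\epsilon$ and push the contour upward. Using the asymptotics of $\fad{\mathsf{b}}$ at infinity together with the hypothesis $B>A>0$, control the growth of the integrand so that the shifted integral $\int_{\BR+\im(\epsilon+R)}$ tends to $0$ (or to a controllable term) as $R\to\infty$; the integral is thus expressed as a sum of residues. (2) Organize the poles at $c_{\mathsf{b}}+\frac{\im}{\mathsf{s}}\mathbbm Z_{\ge 0}$-translates by the congruence $MP+NQ=1$: a pole corresponding to $\im(m\mathsf{b}+n\mathsf{b}^{-1})$ depends only on $Mn+Nm$ (one checks $m\mathsf{b}+n\mathsf{b}^{-1}=\frac{Nm+Mn}{\mathsf{s}}$, wait — rather $\mathsf{b}=\mathsf{s}/N$, $\mathsf{b}^{-1}=\mathsf{s}/M$ so $m\mathsf{b}+n\mathsf{b}^{-1}=\mathsf{s}(m/N+n/M)=\frac{\mathsf{s}}{MN}(Mm+Nn)$). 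Group the lattice points by the value of $Mm+Nn$, and within each group the $MN$ representatives $k\mapsto(m,n)=(kP,kQ)\bmod(N,M)$ are exactly the terms summed in $G_{M,N}$. (3) Compute the order-$B$ residue at such a collapsed pole. Here the factor $\fad{\mathsf{b}}(x)^B$ near a pole behaves, via the inversion/residue formula for $\fad{\mathsf{b}}$, like a constant times $(x-\text{pole})^{-B}$ times a regular prefactor; after extracting $B$ derivatives one produces the Rogers dilogarithm term $e^{\frac{\im B}{2\pi\mathsf{s}^2}\operatorname{R}(z)}$ (the Rogers dilogarithm arising as the "classical limit" / leading asymptotic exponent of $\fad{\mathsf{b}}$ at roots of unity), the algebraic prefactor $(1-z)^{\ast}/(z\,\operatorname{g}'(z))$ coming from the Jacobian of the change of variables $z=e^{2\pi\mathsf{s}w}$ and from $\operatorname{g}(z)=(-z)^A(1-z)^{-B}$, and the cyclic-dilogarithm denominators $\slashed{\operatorname{D}}_N(\th_+,q_+)^B\slashed{\operatorname{D}}_M(\th_-,q_-)^B$ from resumming the finite geometric-type products $(q x;q)_k^{-B}$ at $q$ a root of unity. (4) Identify the set of surviving poles with $\calS$: the condition $\operatorname{g}(e^{2\pi\mathsf{s}w})=1$ selects precisely the $w$ at which the collapsed-pole prefactor is non-degenerate, and the window $0<\mathsf{s}\,\mathrm{Im}(w)-\lambda<1$ (with $\lambda$ in the range \eqref{eq.lambda}) is the fundamental domain of the residue sum picked up by the contour shift — the genericity of $\lambda$ ensures no pole sits on the boundary. (5) Collect the scalar prefactors — the $e^{\pi\im(\cdots)/(12MN)}$ factor — from the Gaussian $e^{-A\pi\im x^2}$ evaluated at the pole locations, the shift $c_{\mathsf{b}}$, and the normalization constants in the residue of $\fad{\mathsf{b}}$; then invoke Lemma~\ref{lem.gg} to see that $G_{M,N}(\th_+,\th_-)$ is well-defined (independent of $P,Q$) under the constraint $\th_+^N=\th_-^M=z$ recorded in \eqref{eq.conventions}.

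The hard part will be step (3): carefully computing the order-$B$ residue at a pole where infinitely many simple poles of $\fad{\mathsf{b}}$ have collided, and showing that the answer factors exactly into (Rogers dilogarithm exponential) $\times$ (algebraic Jacobian) $\times$ (cyclic dilogarithm) $\times$ ($G_{M,N}$). This requires a precise local expansion of $\fad{\mathsf{b}}(x)^B$ near a rational-$\mathsf{b}$ pole — essentially the statement that $\fad{\mathsf{b}}$ at $\mathsf{b}^2=M/N$ degenerates into a product of a cyclic dilogarithm $\operatorname{D}_N$, a $\tilde q$-cyclic dilogarithm, and a smooth exponential-of-Rogers factor — and then matching the combinatorics of the $B$-fold derivative against the quasi-periodicity cocycle $\operatorname{g}^\pm_k$. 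The bookkeeping of the constant prefactor in step (5) is also delicate but ultimately routine; I expect most of the genuinely new work to be concentrated in the local residue analysis and in verifying that summing over the $MN$ classes reproduces $G_{M,N}$ rather than some reshuffled variant.
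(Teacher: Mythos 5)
Your proposal misidentifies where the residues in Equation~\eqref{eq.Ib} come from, and this is a fatal gap rather than a bookkeeping issue. The sum in \eqref{eq.Ib} runs over $\calS$, the solutions of the algebraic equation $\operatorname{g}(e^{2\pi\mathsf{s}w})=1$ lying in a thin strip of height $1/\mathsf{s}$; these points are generically \emph{not} poles of the integrand $\fad{\mathsf{b}}(x)^Be^{-A\pi\im x^2}$, whose poles sit on the fixed lattice $c_{\mathsf{b}}+\im\BN\mathsf{b}+\im\BN\mathsf{b}^{-1}$. No contour deformation past the poles of $\fad{\mathsf{b}}^B$ can produce a finite sum over the roots of $(-z)^A=(1-z)^B$ weighted by $1/(z\operatorname{g}'(z))$: that weight is the residue of $1/(1-\operatorname{g}(e^{2\pi\mathsf{s}z}))$ at a \emph{simple} zero of $1-\operatorname{g}$, not an order-$B$ residue of the quantum dilogarithm. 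The mechanism the paper actually uses (Lemma~\ref{lem1} together with Lemma~\ref{lem.main}) is to exploit quasi-periodicity to rewrite the convergent integral as
\begin{equation*}
\left(\int_{\mathcal{C}_{\lambda}}-\int_{\mathcal{C}_{\lambda+1}}\right)
\frac{\operatorname{f}(z)\,\operatorname{S}\!\left(e^{2\pi\mathsf{b}z},e^{2\pi\mathsf{b}^{-1}z}\right)}{1-\operatorname{g}\!\left(e^{2\pi\mathsf{s}z}\right)}\operatorname{d}\!z,
\end{equation*}
a difference of two contours a distance $1/\mathsf{s}$ apart enclosing only the simple poles contributed by the denominator $1-\operatorname{g}$; the state-sum $G_{M,N}=\operatorname{S}$ appears from folding the $\mathsf{s}^2=MN$ intermediate shifts back into the fundamental strip. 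This idea is absent from your plan, and without it steps (1), (3) and (4) do not go through.

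Two further problems with the route you do propose. First, at $\mathsf{b}^2=M/N$ the colliding poles of $\fad{\mathsf{b}}^B$ at $c_{\mathsf{b}}+\im k/\mathsf{s}$ have order $B$ times the number of pairs $(m,n)\in\BN^2$ with $Nm+Mn=k$, which grows without bound in $k$, so there is no uniform ``order-$B$ residue'' to compute; moreover the convergence of the resulting infinite residue sum (equivalently, the vanishing of the shifted contour as $R\to\infty$) is exactly what degenerates at roots of unity --- this is why the $q$-series factorization of \cite{state-integrals} cannot simply be specialized to rational $\mathsf{b}^2$. Second, you fold the evaluation of $\fad{\mathsf{b}}$ at rational $\mathsf{b}$ in terms of the Rogers and cyclic dilogarithms into the residue computation; in the paper this is a separate statement (Theorem~\ref{thm.2}), proved independently by applying the same quasi-periodicity lemma to the integral representation of $\frac{\partial}{\partial x}\log\fad{\mathsf{b}}$, and then used only to evaluate $\operatorname{f}$ at the points of $\calS$ via \eqref{eq.fzval}. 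Your step (2), grouping lattice points by the value of $Mm+Nn$ and relating the representatives to $(kP,kQ)$ with $MP+NQ=1$, is the one ingredient that correctly anticipates how $G_{M,N}$ arises, but it is deployed on the wrong set of points.
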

Note that when $\operatorname{g}(z)=1$, we have
\begin{equation}
\label{eq.gp}
\operatorname{g}'(z)=A z^{-1} + B (1-z)^{-1} \,.
\end{equation}

\begin{corollary}
\label{cor.1}
For $M=1$ we obtain that
\begin{equation}
\label{eq.IbN}
\calI_{A,B}(\mathsf{b})=
e^{\pi\im \frac{B+3 A (N+2)^2-6N}{12 N}} 
\frac{1}{\sqrt{N}}
\sum_{w \in \calS}
e^{\frac{\im B}{2\pi N}\operatorname{R}(z)}
\frac{(1-z)^{\frac{2N+3}{4N}B}}{
(A+B z/(1-z)) \slashed{\operatorname{D}}_N(\th_+,q_+)^{B}}  
G^+_N(\th_+) \,.
\end{equation}
When $M=N=1$ we obtain that
\begin{equation}
\label{eq.Ib1}
\calI_{A,B}(1)=
e^{\pi\im \frac{B+3 A -6}{12}} 
\sum_{w \in \calS}
e^{\frac{\im B}{2\pi}\operatorname{R}(z)}
\frac{(1-z)^{\frac{B}{4}}}{(A+B z/(1-z))} \,.
\end{equation}
\end{corollary}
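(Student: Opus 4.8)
The plan is to derive \eqref{eq.IbN} and \eqref{eq.Ib1} directly from Theorem~\ref{thm.1} by specialization; the set $\calS$ is inherited from \eqref{eq.calS} with the relevant pair $(M,N)$, noting that the interval \eqref{eq.lambda} for $\lambda$ is nonempty in both cases. First I would put $M=1$ in \eqref{eq.Ib}. Then $\mathsf{s}=\sqrt N$, so $\mathsf{s}^{-1}=1/\sqrt N$ and $\mathsf{s}^2=N$, while $q_-=\zeta_M^N=\zeta_1^N=1$ and $\th_-=z^{1/M}=z$. The definition of $\slashed{\operatorname{D}}_N$ gives $\slashed{\operatorname{D}}_1(x;q)=(1-xq)^{1}=1-xq$, hence $\slashed{\operatorname{D}}_M(\th_-,q_-)^{B}=(1-z)^{B}$, and combining this with the numerator factor $(1-z)^{\frac{(2N+1)(2M+1)}{4MN}B}=(1-z)^{\frac{3(2N+1)}{4N}B}$ leaves the net power $(1-z)^{\frac{6N+3-4N}{4N}B}=(1-z)^{\frac{2N+3}{4N}B}$. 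Using the identity $G_{1,N}(x_+,x_-)=G^+_N(x_+)$ noted above (take $P=1,Q=0$ in the definition of $G_{M,N}$, so that the $\operatorname{g}^-$ factor is $\operatorname{g}^-_0\equiv1$), the evaluation $z\operatorname{g}'(z)=A+Bz/(1-z)$ from \eqref{eq.gp}, and the value $\frac{B+3A(N+2)^2-6N}{12N}$ of the phase exponent in \eqref{eq.Ib} at $M=1$, one obtains \eqref{eq.IbN}.

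Next I would set $N=1$ in \eqref{eq.IbN}. Then $1/\sqrt N=1$, $\th_+=z^{1/N}=z$ and $q_+=\zeta_N^M=\zeta_1=1$, so as before $\slashed{\operatorname{D}}_N(\th_+,q_+)^{B}=\slashed{\operatorname{D}}_1(z,1)^{B}=(1-z)^{B}$; moreover $G^+_1(z)=\operatorname{g}^+_0(z,1)=1$, since the sum in \eqref{eq.GN} reduces to its $k=0$ term and $\operatorname{g}^\pm_0\equiv1$ by \eqref{eq.gpm}. The surviving power of $(1-z)$ is then $(1-z)^{(\frac{2N+3}{4N}-1)B}\big|_{N=1}=(1-z)^{B/4}$, which is the body of \eqref{eq.Ib1}. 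For the prefactor, \eqref{eq.IbN} at $N=1$ has exponent $\frac{B+3A\cdot 9-6}{12}=\frac{B+27A-6}{12}$; since $A\in\BN$, the exponents $\pi\im\frac{B+27A-6}{12}$ and $\pi\im\frac{B+3A-6}{12}$ differ by $\pi\im\frac{24A}{12}=2\pi\im A\in 2\pi\im\BZ$, so $e^{\pi\im\frac{B+27A-6}{12}}=e^{\pi\im\frac{B+3A-6}{12}}$. This is the phase in \eqref{eq.Ib1}, completing the deduction.

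The argument is essentially bookkeeping of the exponents of $(1-z)$ together with the two degenerations $\slashed{\operatorname{D}}_1(x;q)=1-xq$ and $G^+_1\equiv1$; the one point that deserves care — the nearest thing to an obstacle — is the last step, where one must observe that the exponential prefactor of Theorem~\ref{thm.1} is a genuine complex number and verify \emph{explicitly} that the specialized and the reduced exponents differ by an element of $2\pi\im\BZ$ (namely $2\pi\im A$), rather than simply reducing the exponent modulo $2$. Everything else is substitution into \eqref{eq.Ib}, \eqref{eq.gpm}, \eqref{eq.gp}, \eqref{eq.GN}, and the definition of $\slashed{\operatorname{D}}_N$.
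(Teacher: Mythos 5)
Your proposal is correct and is exactly the argument the paper intends: the corollary is obtained by direct specialization of Theorem~\ref{thm.1} at $M=1$ and then $M=N=1$, using $\slashed{\operatorname{D}}_1(x;q)=1-xq$, $G_{1,N}=G^+_N$, $G^+_1\equiv 1$, and $z\operatorname{g}'(z)=A+Bz/(1-z)$, with the bookkeeping of the $(1-z)$-exponents and the observation that the two phase exponents differ by $2\pi\im A$ all checking out. No gaps.
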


Let us denote
\begin{equation}
\label{eq.ex}
\mathsf{e}(x)=e^{2 \pi \im x} \,.
\end{equation}

\begin{corollary}
\label{cor.41.b=1}
When $M=N=1$ and $(A,B)=(1,2)$, we choose $\lambda$ to be a negative real
number near zero,
\begin{align*}
\operatorname{g}(z) &=-z(1-z)^{-2} \\
\calS &= \{\im /6, 5 \im /6 \} \\
z_\pm &= \mathsf{e}(\pm1/6)\\
e^{\pi\im \frac{B+3 A -6}{12}} &= \mathsf{e}\left(-\frac{1}{24}\right) \\
(e^{\frac{\im B}{2\pi}\operatorname{R} (z_+)} ,
e^{\frac{\im B}{2\pi}\operatorname{R} (z_-)} )
&= \left(e^{-C} \mathsf{e}\left(-\frac{1}{24}\right),
- e^{C} \mathsf{e}\left(-\frac{1}{24}\right) 
\mathsf{e}\left(\frac{1}{3}\right) \right)
\\
\frac{(1-z_\pm)^{\frac{B}{4}}}{(A+B z_\pm/(1-z_\pm))}
&=\frac{1}{\sqrt{3}} \mathsf{e}\left(\mp \frac{1}{3}\right)
\end{align*}
where $C=V/(2\pi)$ and $V$ is the volume of the $4_1$ knot. 
When computing the Rogers dilogarithm of $z_\pm$, keep in mind that we
use the branches of the logarithm
$\log z_+= 2 \pi \im/6$ and $\log z_-= 10 \pi \im/6$ dictated by
Equations~\eqref{eq.calS} and~\eqref{eq.lambda}.

The above computation, combined with Equation~\eqref{eq.Ib1} implies 
Equation~\eqref{eq.I211}. As was already mentioned, the proof
of this equation was a main motivation for the results of our paper.
\end{corollary}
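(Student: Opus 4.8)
The statement is an instance of Equation~\eqref{eq.Ib1}, so the plan is to specialize that formula to $(A,B)=(1,2)$, $M=N=1$, compute each quantity in the displayed list, and multiply everything out. First I would record the prefactor $e^{\pi\im(B+3A-6)/12}=e^{-\pi\im/12}=\mathsf{e}(-1/24)$ and the rational function $\operatorname{g}(z)=(-z)^A(1-z)^{-B}=-z(1-z)^{-2}$. The equation $\operatorname{g}(z)=1$ reads $z^2-z+1=0$, with roots $z=\mathsf{e}(\pm1/6)$; along the way this gives $z+z^{-1}=1$, hence $1-z=z^{-1}$, at both roots, which I will use repeatedly.

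Next I would pin down $\calS$ together with its branch data. Since $\mathsf{s}=\sqrt{MN}=1$ and $\lambda\in(-1,0)$ is taken near $0$, the condition $0<\mathsf{s}\,\mathrm{Im}(w)-\lambda<1$ of~\eqref{eq.calS} becomes $\lambda<\mathrm{Im}(w)<1+\lambda$; intersecting this with $\{w:\ e^{2\pi w}\in\{\mathsf{e}(\pm1/6)\}\}$ leaves exactly $w=\im/6$ and $w=5\im/6$ (the candidate $w=-\im/6$ is excluded because $-1/6<\lambda$ fails for $\lambda$ near $0$). Thus $\calS=\{\im/6,5\im/6\}$, $z_{\pm}=\mathsf{e}(\pm1/6)$, and~\eqref{eq.lambda} forces the branches $\log z_+=2\pi\im/6$ and $\log z_-=10\pi\im/6$. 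The algebraic factor is then immediate: $A+Bz/(1-z)=1+2z/(1-z)=(1+z)/(1-z)$, and from $1+z_{\pm}=\tfrac32\pm\tfrac{\sqrt3}{2}\im=\sqrt3\,\mathsf{e}(\pm1/12)$ and $1-z_{\pm}=z_{\pm}^{-1}=\mathsf{e}(\mp1/6)$ one gets $(1-z_{\pm})^{B/4}/(A+Bz_{\pm}/(1-z_{\pm}))=(1-z_{\pm})^{3/2}/(1+z_{\pm})=\tfrac1{\sqrt3}\mathsf{e}(\mp1/3)$.

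For the Rogers factor I would feed in the classical value $\Li_2(\mathsf{e}(1/6))=\tfrac{\pi^2}{36}+\tfrac{\im}{2}V$: its real part is $\tfrac{\pi^2}{6}-\tfrac{\pi\theta}{2}+\tfrac{\theta^2}{4}$ evaluated at $\theta=\pi/3$ (the Fourier expansion of the second Bernoulli polynomial), and its imaginary part is $V/2$ by definition of the volume of $4_1$. With $\log z_+=\pi\im/3$ and the principal value $\log(1-z_+)=-\pi\im/3$, Equation~\eqref{eq.rogers} gives $\operatorname{R}(z_+)=\tfrac{\pi^2}{36}+\tfrac{\im}{2}V+\tfrac12\cdot\tfrac{\pi\im}{3}\cdot(-\tfrac{\pi\im}{3})-\tfrac{\pi^2}{6}=-\tfrac{\pi^2}{12}+\tfrac{\im}{2}V$, so $e^{\frac{\im B}{2\pi}\operatorname{R}(z_+)}=e^{-V/(2\pi)}\mathsf{e}(-1/24)=e^{-C}\mathsf{e}(-1/24)$. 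For $z_-$ the point — and I expect this to be the only delicate step — is that $\operatorname{R}$ is evaluated on the universal abelian cover at the cover point with $\log z_-=10\pi\im/6$, not the principal $-\pi\im/3$: since $\Li_2$ has trivial monodromy around $z=0$ its value stays $\tfrac{\pi^2}{36}-\tfrac{\im}{2}V=\overline{\Li_2(z_+)}$ while $\log(1-z_-)$ stays principal, $=\pi\im/3$ (equivalently, passing to this sheet by the deck transformation $\log z\mapsto\log z+2\pi\im$ changes $\operatorname{R}$ by $\pi\im\log(1-z)$). This yields $\operatorname{R}(z_-)=\tfrac{\pi^2}{36}-\tfrac{\im}{2}V+\tfrac12\cdot\tfrac{5\pi\im}{3}\cdot\tfrac{\pi\im}{3}-\tfrac{\pi^2}{6}=-\tfrac{5\pi^2}{12}-\tfrac{\im}{2}V$, hence $e^{\frac{\im B}{2\pi}\operatorname{R}(z_-)}=e^{V/(2\pi)}\mathsf{e}(-5/24)=-e^{C}\mathsf{e}(-1/24)\mathsf{e}(1/3)$, matching the displayed list.

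Finally I would substitute everything into~\eqref{eq.Ib1}. Absorbing the prefactor $\mathsf{e}(-1/24)$ into the two summands, the $w=\im/6$ term becomes $\tfrac1{\sqrt3}e^{-C}\mathsf{e}(-5/12)$ and the $w=5\im/6$ term becomes $-\tfrac1{\sqrt3}e^{C}\mathsf{e}(-5/12)$, so the sum is $\tfrac{\mathsf{e}(-5/12)}{\sqrt3}(e^{-C}-e^{C})$; since $-\mathsf{e}(-5/12)=\mathsf{e}(1/12)=e^{\pi\im/6}$, this equals $\tfrac{e^{\pi\im/6}}{\sqrt3}(e^{V/(2\pi)}-e^{-V/(2\pi)})$, which is precisely~\eqref{eq.I211}.
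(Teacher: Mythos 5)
Your computation is correct and follows exactly the paper's route: the corollary's ``proof'' in the paper is precisely the displayed list of specializations of Equation~\eqref{eq.Ib1}, and you verify each entry (prefactor, roots of $\operatorname{g}(z)=1$, the branch choices $\log z_\pm$ forced by \eqref{eq.calS}--\eqref{eq.lambda}, the Rogers values via $\Li_2(\mathsf{e}(\pm1/6))=\pi^2/36\pm\im V/2$, and the algebraic factor) before assembling them into \eqref{eq.I211}. The only nitpick is a harmless slip of wording when you exclude $w=-\im/6$ (the condition that fails is $\lambda<-1/6$, not ``$-1/6<\lambda$''); the conclusion $\calS=\{\im/6,5\im/6\}$ is right.
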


We now make few remarks about the number-theoretic, analytic and geometric properties of 
Equation~\eqref{eq.Ib}. 

\begin{remark}
\label{rem.algebraic}
It~\cite{GZ1} (see also~\cite{DG2}) it was observed that although 
$(G^+(\th_+))^N$ and $(\slashed{\operatorname{D}}_N(\th_+,\zeta_N))^N$ 
lie in the field $F_{G,N}=\BQ(\th_+,\zeta_N)$, their ratio lies in the 
smaller field $\BQ(z,\zeta_N)$ (where $z$
satisfies $\operatorname{g}(z)=1$) which is an extension of $\BQ(z)$  
by $\zeta_N$. In particular, the above
mentioned ratio is independent of the choice of the $N$-th root of $z$. 
\end{remark}

\begin{remark}
\label{rem.multivalued}
Although $\slashed{\operatorname{D}}_N$ is a multivalued function, the
sum in Equation~\eqref{eq.Ib} is well-defined. This is a consequence
of Theorem~\ref{thm.2} below and the fact that the quantum dilogarithm is 
a meromorphic function.
\end{remark}

\begin{remark}
\label{rem.parabolic}
When the state-integral is associated with a cusped hyperbolic manifold $M$, 
the set $\calS$ is often in bijection with the set of nonabelian parabolic 
$\PSL(2,\BC)$ representations of $M$. Under such a bijection, the Rogers
dilogarithm matches with the complex volume, and the value of $g'(z)$ matches
with the value of the 1-loop invariant of~\cite{DG}, suitably normalized.
For an illustration, see Section~\ref{sub.pretzel}.
\end{remark}

\begin{remark}
\label{rem.QHG}
When the state-integral is associated with a cusped hyperbolic manifold $M$
and the identification of Remark~\ref{rem.parabolic} is available,
one can identify Equation~\eqref{eq.Ib} with a sum of invariants of $M$
parametrized by nonabelian parabolic $\PSL(2,\BC)$ representations of $M$.
Such invariants appear in Quantum Hyperbolic Geometry--see~\cite{K97} and
also~\cite{BB}. The invariants of Quantum Hyperbolic Geometry are defined 
up to multiplication by an $N$-root of unity. However, Equation~\eqref{eq.Ib}
gives a well-defined {\em relative} choice of the $N$-th roots of unity.
This is a consequence of the meromorphicity of the quantum dilogarithm. 
\end{remark}

\begin{remark}
\label{rem.taylor}
As we already mentioned above, a numerical computation by the first
author and Zagier suggests an explicit formula for the Taylor series
of $\calI_{1,2}(\mathsf{b})$ at $\mathsf{b}=1$ in terms of the asymptotics
of the Kashaev invariant at $q=1$. We expect that the Taylor series
of state-integrals at $\mathsf{b}=\sqrt{M/N}$ can be expressed in terms of
the loop invariants of Garoufalidis--Dimofte~\cite{DG2}. We plan to study
this in a later publication.
\end{remark}

Theorem \ref{thm.1} follows from a lemma from complex analysis
regarding integrals of quasi-periodic functions~\ref{lem.main}. This 
lemma is used twice, once to evaluate the quantum dilogarithm in terms of the 
cyclic dilogarithm, and another time to evaluate the state-integral 
$\calI_{A,B}(\mathsf{b})$.

\subsection{The quantum dilogarithm at roots of unity}
\label{sub.evalphi}

We fix an admissible pair $(M,N)$. Recall 
$\mathsf{b}$ and $\mathsf{s}$ from Equation~\eqref{eq.bs}.
Let $\Li_2(z)=\sum_{n=1}^\infty \frac{z^n}{n^2}$ denote the Euler dilogarithm,
defined for $|z|<1$ and analytically continued as a multivalued
function on $\BC\setminus\{0,1\}$.

\begin{theorem}
\label{thm.2}
We have:
\begin{equation}
\label{eq.Phieval}
\fad{\mathsf{b}}\!\left(\frac{z}{2\pi\mathsf{s}}-c_{\mathsf{b}}\right)=
\frac{e^{\frac{\im}{2\pi\mathsf{s}^2}\operatorname{Li_2}(e^{z})}
\left(1-e^z\right)^{1+\frac{\im z}{2\pi\mathsf{s}^2}}}{
\operatorname{D}_N(e^{z/N};q_+)\operatorname{D}_M(e^{z/M};q_-)} \,.
\end{equation}
\end{theorem}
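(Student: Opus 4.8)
The plan is to prove the identity \eqref{eq.Phieval} by exhibiting both sides as quasi-periodic functions with the same quasi-periods, the same zeros and poles, and then pinning down the remaining exponential/Gaussian factor by a normalization (asymptotic) condition. The key input is the quasi-periodicity of Faddeev's quantum dilogarithm recorded in Equations~\eqref{eq.bshift}--\eqref{eq.tbshift} (reviewed in Appendix~\ref{sec.QDL}): shifting the argument of $\fad{\mathsf{b}}$ by $\im\mathsf{b}^{\pm1}$ multiplies it by an explicit ratio of the form $(1+\text{monomial})^{\pm1}$. With the substitution $x=\frac{z}{2\pi\mathsf{s}}-c_{\mathsf{b}}$ and $\mathsf{b}^2=M/N$, a shift $x\mapsto x+\im\mathsf{b}=x+\frac{\im}{2\pi}\cdot\frac{2\pi\mathsf{b}}{1}$ corresponds, after clearing denominators, to $z\mapsto z+2\pi\im\mathsf{b}\cdot\mathsf{s}=z+2\pi\im M$ (using $\mathsf{b}\mathsf{s}=M$), and similarly $x\mapsto x+\im\mathsf{b}^{-1}$ corresponds to $z\mapsto z+2\pi\im N$. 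So the left-hand side of \eqref{eq.Phieval} is quasi-periodic under $z\mapsto z+2\pi\im M$ and $z\mapsto z+2\pi\im N$, and one computes the two multipliers from \eqref{eq.bshift}--\eqref{eq.tbshift}. On the right-hand side, I would check that the multipliers agree: the factor $e^{\frac{\im}{2\pi\mathsf{s}^2}\Li_2(e^z)}(1-e^z)^{1+\frac{\im z}{2\pi\mathsf{s}^2}}$ transforms under $z\mapsto z+2\pi\im k$ via the five-term/inversion-type behavior of $\Li_2$ together with the branch jump of $(1-e^z)^{\bullet}$ and of the prefactor $e^{\frac{\im z}{2\pi\mathsf{s}^2}\log(1-e^z)}$; and the cyclic dilogarithms $\operatorname{D}_N(e^{z/N};q_+)$, $\operatorname{D}_M(e^{z/M};q_-)$ transform by exactly the functional equation quoted after \eqref{eq.DN}, namely $\operatorname{D}_N(\zeta_N y;\zeta_N)^N/\operatorname{D}_N(y;\zeta_N)^N=(1-y)^N/(1-y^N)$, which is the source of the monomial-ratio multiplier. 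The bookkeeping is designed so that the two sides pick up the same factor under each of the two independent shifts.

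Once the ratio $\Psi(z)$ of the two sides is genuinely periodic under both $z\mapsto z+2\pi\im M$ and $z\mapsto z+2\pi\im N$, coprimality of $(M,N)$ forces $\Psi$ to be periodic under $z\mapsto z+2\pi\im$, hence a function of $q:=e^z$ (more precisely of $e^z$ on a suitable covering after accounting for the $N$-th and $M$-th roots $e^{z/N},e^{z/M}$, but the combination $\operatorname{D}_N(e^{z/N})\operatorname{D}_M(e^{z/M})$ is arranged to be single-valued in $e^z$ up to the controlled branch factors already absorbed). Next I would verify that $\Psi$ has no zeros or poles: the poles and zeros of $\fad{\mathsf{b}}$ lie at the points $\pm(c_{\mathsf{b}}+\im m\mathsf{b}+\im n\mathsf{b}^{-1})$, $m,n\ge0$; under the substitution these land exactly at the zeros of the polynomial factors $\prod(1-q_+^k e^{z/N})$ and $\prod(1-q_-^k e^{z/M})$ on the right (and at the zeros/poles coming from $(1-e^z)^{1+\cdots}$), and I would match them with multiplicity. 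A doubly-periodic (hence, after the coprimality reduction, simply periodic and bounded in the relevant strip) holomorphic nonvanishing function is constant.

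Finally I would fix the constant by a limiting computation. The natural choice is $z\to-\infty$ along the real axis (so $e^z\to0$): then $\fad{\mathsf{b}}$ of a large negative real argument tends to $1$ by its standard asymptotics, $\Li_2(e^z)\to0$, $(1-e^z)^{1+\frac{\im z}{2\pi\mathsf{s}^2}}\to1$ (here one must check the $\frac{\im z}{2\pi\mathsf{s}^2}\log(1-e^z)$ term vanishes in the limit, which it does since $\log(1-e^z)\sim -e^z\to0$ faster than $z\to-\infty$), and $\operatorname{D}_N(e^{z/N};q_+)\to\operatorname{D}_N(0)=1$, likewise $\operatorname{D}_M\to1$; so both sides tend to $1$ and the constant is $1$. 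I expect the main obstacle to be the careful branch-tracking in the middle step: making sure the multivalued pieces $(1-e^z)^{1+\frac{\im z}{2\pi\mathsf{s}^2}}$, the analytically continued $\Li_2$, and the $N$-th/$M$-th roots implicit in $\operatorname{D}_N,\operatorname{D}_M$ are assigned compatible branches so that the two quasi-periodicity multipliers match \emph{on the nose} (not merely up to a root of unity), and so that the final function is honestly single-valued. This is exactly the delicate point flagged in Remarks~\ref{rem.multivalued} and~\ref{rem.QHG}, and it is what makes the relative choice of $N$-th roots in Theorem~\ref{thm.1} well-defined; getting the constant phases right in this step is the crux of the argument, whereas the zero/pole matching and the constancy argument are routine once the quasi-periods are in hand.
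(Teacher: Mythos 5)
Your strategy (match quasi-periodicity multipliers, show the ratio is periodic and zero/pole free, fix the constant by asymptotics) is genuinely different from the paper's. The paper never compares the two sides directly as multivalued functions: it computes $\frac{\partial}{\partial x}\log\fad{\mathsf{b}}$ from the integral representation, applies Lemma~\ref{lem1} and the residue theorem to reduce it to a finite sum of residues $S_1+S_2+S_3$ at $z=\im m/M$, $\im n/N$, $\im$, identifies each $S_i$ with $\frac{2}{\pi}(1-(-1)^{M+N}e^x)\frac{\partial}{\partial x}\log C_i$ for explicit $C_i$ built from $\operatorname{D}_M$, $\operatorname{D}_N$ and $\Li_2$ (Lemmas~\ref{lem.SC} and~\ref{lemim}), and then integrates from $x=-\infty$. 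Working at the level of logarithmic derivatives and integrating along a fixed path from $-\infty$ is precisely what makes every branch choice canonical and the final identity exact, with no root-of-unity ambiguity to resolve.

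That ambiguity is where your proposal has a genuine gap, and you have correctly located it but not closed it. The only functional equation available from the text is for $\operatorname{D}_N(\cdot;\zeta_N)^N$, so the multiplier of $\operatorname{D}_N(e^{z/N};q_+)$ under $z\mapsto z+2\pi\im M$ is determined only up to an $N$-th root of unity; matching the multipliers ``on the nose'' is exactly the content of the theorem (the paper itself stresses that the left side is meromorphic while the right side is assembled from multivalued pieces), so deferring it is deferring the proof. Two further steps you call routine are not: (i) at $\mathsf{b}^2=M/N$ the poles of $\fad{\mathsf{b}}$ collide, acquiring multiplicities $\#\{(m,n)\in\BN^2:\ mM+nN=k\}$, and these must be matched against sums of \emph{fractional} orders $k/N$, $l/M$ and $1-j/(MN)$ coming from $\operatorname{D}_N$, $\operatorname{D}_M$ and $(1-e^z)^{1+\frac{\im z}{2\pi\mathsf{s}^2}}$ at the points $z\in 2\pi\im\BZ$ --- which are simultaneously the branch points of $\Li_2(e^z)$ and $\log(1-e^z)$, so the pole/zero bookkeeping cannot be separated from the branch bookkeeping; (ii) for the Liouville step, a nonvanishing holomorphic function of $w=e^z$ with limit $1$ as $w\to 0$ need not be constant (e.g.\ $e^{w}$), so you must also control the ratio as $\mathrm{Re}\,z\to+\infty$ by comparing the Gaussian asymptotics~\eqref{eq.as} of $\fad{\mathsf{b}}$ with the growth of $e^{\frac{\im}{2\pi\mathsf{s}^2}\Li_2(e^z)}(1-e^z)^{\frac{\im z}{2\pi\mathsf{s}^2}}$; you only check $z\to-\infty$. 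The outline could in principle be completed, but as written it omits the parts that carry the actual content of the theorem.
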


It is remarkable that the left-hand side is a meromorphic function of $z$
whereas the right hand side is assembled out of multivalued functions
of $z$.

In particular when $M=1$, we obtain that
\begin{equation}
\label{eq.evalphiN}
\fad{\mathsf{b}}(x-c_{\mathsf{b}})= \frac{e^{-\frac{1}{2 \pi \im N}\Li_2(z^N)}}{
\operatorname{D}_N(z)}
(1-z^N)^{1+ \frac{\im x}{\sqrt{N}}}, \qquad z=e^{2 \pi \mathsf{b} x} \,,
\end{equation}
and when $M=N=1$, we obtain that
\begin{equation}
\label{eq.phi1}
\fad{1}(x)=\exp\left( \frac{\im}{2 \pi} \left( \Li_2(e^{2 \pi x})+2 \pi x
\log(1-e^{2 \pi x})\right)\right) \,.
\end{equation}

By using the equality
\begin{equation}
\frac{\fad{\mathsf{b}}\!\left(\frac{z}{2\pi\mathsf{s}}-c_{\mathsf{b}}\right)}
{\fad{\mathsf{b}}\!\left(\frac{z}{2\pi\mathsf{s}}+c_{\mathsf{b}}\right)}=
\left(1-e^{z/N}\right)\left(1-e^{z/M}\right)
\end{equation}
we also have
\begin{equation}\label{eq.Phieval+}
\fad{\mathsf{b}}\!\left(\frac{z}{2\pi\mathsf{s}}+c_{\mathsf{b}}\right)=
\frac{e^{\frac{\im}{2\pi\mathsf{s}^2}\operatorname{Li_2}(e^{z})}
\left(1-e^z\right)^{1+\frac{\im z}{2\pi\mathsf{s}^2}}}{
\slashed{\operatorname{D}}_N(e^{z/N};q_+)
\slashed{\operatorname{D}}_M(e^{z/M};q_-)},
\end{equation}

\begin{remark}
\label{rem.ORV}
The cyclic dilogarithm is in a sense a radial limit of the generating series
$$
M(x,q)=\prod_{n=1}^\infty \frac{1}{(1-x q^n)^n}
$$
where $M(x,q)$ is the McMahon generating series of 3-dimensional
{\em plane partitions}; see \cite{top-vertex} and \cite[Sec.2.1]{ORV}. 
The latter appear in M-theory and mirror symmetry. It would be
interesting and useful to understand a precise relation between 
the function $\operatorname{D}_N$ and plane partitions. 
\end{remark}

\subsection*{Acknowledgments}
The paper originated as an attempt to prove an identity conjectured by
joint work of Zagier and the first author. We wish to thank Tudor Dimofte
and especially Don Zagier for enlightening conversations and for a 
generous sharing of their ideas. The paper was conceived during a conference 
in Vietnam in 2013, and largely completed in Geneva in 2014 at the Confucius 
Institute of the University of Geneva, and during a conference in Quantum 
Topology in Magnitogorsk, Russia in 2014 and during an Oberwolfach workshop 
in August 2014. The authors wish to thank the organizers of the conferences 
for their hospitality.

S.G. was supported in part by grant DMS-0805078 of the US National Science 
Foundation. R.K. was supported in part by the Swiss National Science Foundation.


\section{Integrals of quasi-periodic functions}
\label{sec.int.quasi}

\subsection{Some lemmas from complex analysis}
\label{sub.two.lemmas}

\begin{lemma}
\label{lem1}
Let $\operatorname{f}\colon \mathcal{U}\to\mathbb{C}$ be an analytic 
function satisfying the functional equation
\begin{equation}
\label{per}
\operatorname{f}(z-a)\operatorname{f}(z+a)=\operatorname{f}(z)^2,
\end{equation}
with some fixed $a\in\mathbb{C}\setminus\{0\}$, the domain 
$\mathcal{U}\subset\mathbb{C}$ being a translationally invariant open set, 
$\mathcal{U}=a+\mathcal{U}$,
and $\mathcal{C}\subset \mathcal{U}$ an oriented path such that 
$\operatorname{f}(z)(\operatorname{f}(z)-\operatorname{f}(z+a))\ne0$ for 
all $z\in \mathcal{C}$. Then
\begin{equation}
\label{residue1}
\int_{\mathcal{C}}\operatorname{f}(z)\operatorname{d}\!z = 
\left(\int_{\mathcal{C}}- \int_{a+\mathcal{C}}\right)
\frac{\operatorname{f}(z)}{1-\operatorname{f}(z+a)/\operatorname{f}(z)}
\operatorname{d}\!z.
\end{equation}
\end{lemma}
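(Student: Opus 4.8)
The plan is to exploit the functional equation \eqref{per} to telescope. Rewrite \eqref{per} in the multiplicative form $\operatorname{f}(z+a)/\operatorname{f}(z) = \operatorname{f}(z)/\operatorname{f}(z-a)$, so that the logarithmic-derivative-like ratio $r(z) := \operatorname{f}(z+a)/\operatorname{f}(z)$ satisfies $r(z-a) = r(z)^2/r(z)$... more precisely, taking logs of \eqref{per} gives that the second difference of $\log\operatorname{f}$ vanishes, i.e. $\log\operatorname{f}$ is "affine" along the lattice direction $a$. The key identity I would establish first is the algebraic rearrangement
\begin{equation*}
\operatorname{f}(z) = \frac{\operatorname{f}(z)}{1-\operatorname{f}(z+a)/\operatorname{f}(z)} - \frac{\operatorname{f}(z+a)}{1-\operatorname{f}(z+2a)/\operatorname{f}(z+a)},
\end{equation*}
valid pointwise wherever the denominators are nonzero. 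Indeed, writing $\rho = \operatorname{f}(z+a)/\operatorname{f}(z)$ and using \eqref{per} in the form $\operatorname{f}(z+2a)/\operatorname{f}(z+a) = \operatorname{f}(z+a)/\operatorname{f}(z) = \rho$, the right-hand side becomes $\operatorname{f}(z)/(1-\rho) - \operatorname{f}(z)\rho/(1-\rho) = \operatorname{f}(z)(1-\rho)/(1-\rho) = \operatorname{f}(z)$, as claimed. This is the heart of the matter: the functional equation makes the "discrete derivative" of the auxiliary function $H(z) := \operatorname{f}(z)/\bigl(1-\operatorname{f}(z+a)/\operatorname{f}(z)\bigr)$ equal to $-\operatorname{f}(z)$.

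Granting this pointwise identity on $\mathcal{C}$ (where the hypothesis $\operatorname{f}(z)(\operatorname{f}(z)-\operatorname{f}(z+a))\ne 0$ guarantees $H$ is finite and the denominators make sense), I would integrate both sides over $\mathcal{C}$:
\begin{equation*}
\int_{\mathcal{C}} \operatorname{f}(z)\,\operatorname{d}\!z = \int_{\mathcal{C}} H(z)\,\operatorname{d}\!z - \int_{\mathcal{C}} H(z+a)\,\operatorname{d}\!z.
\end{equation*}
In the second integral substitute $w = z+a$; since $\mathcal{U} = a+\mathcal{U}$ is translation-invariant and $H$ is analytic where defined, $\int_{\mathcal{C}} H(z+a)\,\operatorname{d}\!z = \int_{a+\mathcal{C}} H(w)\,\operatorname{d}\!w$. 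This yields exactly \eqref{residue1}, since $H(z) = \operatorname{f}(z)/(1-\operatorname{f}(z+a)/\operatorname{f}(z))$.

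The only genuinely delicate point is bookkeeping about where $H$ is well-defined and analytic: we need $\operatorname{f}(z+2a)/\operatorname{f}(z+a)$ to make sense along $\mathcal{C}$ in order to justify the pointwise identity, and we need $H$ analytic along both $\mathcal{C}$ and $a+\mathcal{C}$ for the substitution. By \eqref{per}, $\operatorname{f}(z+a)-\operatorname{f}(z+2a) = \operatorname{f}(z+a)\bigl(1 - \operatorname{f}(z+a)/\operatorname{f}(z)\bigr)$, so the condition $\operatorname{f}(z)(\operatorname{f}(z)-\operatorname{f}(z+a))\ne 0$ on $\mathcal{C}$ is exactly what propagates to make the shifted denominator nonzero; thus the single hypothesis stated is precisely the right one, and no extra assumptions are needed. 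I expect this verification — that the stated nonvanishing condition on $\mathcal{C}$ suffices for all denominators appearing, including the shifted ones — to be the main (though modest) obstacle; everything else is the short algebraic identity plus a change of variables.
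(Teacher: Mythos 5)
Your proof is correct and is essentially the paper's own argument: the same telescoping identity $H(z)-H(z+a)=\operatorname{f}(z)$ derived from the functional equation, followed by the change of variables $z\mapsto z+a$; the paper merely runs the computation from right to left and uses \eqref{per} to rewrite the denominator on $a+\mathcal{C}$ as $1-\operatorname{f}(z)/\operatorname{f}(z-a)$, which is the same well-definedness observation you make at the end.
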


\begin{proof}
We have
\begin{multline}
\left(\int_{\mathcal{C}}- \int_{a+\mathcal{C}}\right)
\frac{\operatorname{f}(z)}{1-\operatorname{f}(z+a)/
\operatorname{f}(z)}\operatorname{d}\!z 
=\int_{\mathcal{C}}\frac{\operatorname{f}(z)}{
1-\operatorname{f}(z+a)/\operatorname{f}(z)}\operatorname{d}\!z\\ 
-\int_{a+\mathcal{C}}\frac{\operatorname{f}(z)}{
1-\operatorname{f}(z)/\operatorname{f}(z-a)}\operatorname{d}\!z
=\int_{\mathcal{C}}\frac{\operatorname{f}(z)-\operatorname{f}(z+a)}{
1-\operatorname{f}(z+a)/\operatorname{f}(z)}\operatorname{d}\!z= 
\int_{\mathcal{C}}\operatorname{f}(z)\operatorname{d}\!z .
\end{multline}
\end{proof}

Given a rational function $\operatorname{r}(z)\in\mathbb{C}(z)$, 
a nonzero complex number $q\in\mathbb{C}\setminus\{0\}$ and an integer
$k$ we define $\operatorname{r}_k(z;q)\in\mathbb{C}(z)$, $k\in\mathbb{Z}$,  
by:
\begin{equation}
\label{rk}
\frac{\operatorname{r}_{k+1}(z;q)}{\operatorname{r}_k(z;q)}
=\operatorname{r}(zq^k),\quad \operatorname{r}_0(z;q)=1.
\end{equation}
Note that
\begin{equation}
\operatorname{r}_{1}(z;q)=\operatorname{r}(z)
\end{equation}
and 
\begin{equation}
\operatorname{r}_{k+l}(z;q)
=\operatorname{r}_k(zq^l;q)\operatorname{r}_l(z;q)
=\operatorname{r}_l(zq^k;q)\operatorname{r}_k(z;q)\,,
\end{equation}
for all integers $k,l$. In particular, 
in the case of roots of unity this implies a quasi-periodicity property:
\begin{equation}
\operatorname{r}_{k+N}(z;q)=\operatorname{r}_k(z;q)\operatorname{r}_N(z;q),
\quad \text{if}\quad q^N=1,
\end{equation}
and the invariance property
\begin{equation}
\operatorname{r}_{N}(zq;q)=\operatorname{r}_N(z;q),\quad \text{if}\quad q^N=1.
\end{equation}

Let $(M,N)$ be an admissible pair, and recall $\mathsf{b}$ and 
$\mathsf{s}$ from Equation~\eqref{eq.bs}.
Choose two integers $P$ and $Q$ which satisfy the equation $MP+NQ=1$. 
Let $\operatorname{f}(z)$ be a meromorphic function and 
$\operatorname{g^\pm}(z)\in\mathbb{C}(z)$ two rational functions such that
\begin{equation}
\frac{\operatorname{f}(z+\mathsf{b}^{\pm1}\im)}{\operatorname{f}(z)}=
\operatorname{g^\pm}\!\left(e^{2\pi\mathsf{b}^{\pm1}z}\right).
\end{equation}
Then, for $k \in \BZ$ we have
\begin{equation}
\operatorname{g^\pm}_k\!\left(e^{2\pi\mathsf{b}^{\pm1}z};q_{\pm}\right)
=\frac{\operatorname{f}(z+\mathsf{b}^{\pm1}\im k)}{\operatorname{f}(z)}\,,
\end{equation}
and, in particular,
\begin{equation}
\operatorname{g^+}_{N}\!\left(e^{2\pi\mathsf{b}z};q_+\right)
= \frac{\operatorname{f}(z+\mathsf{b}\im N)}{\operatorname{f}(z)}
=\frac{\operatorname{f}(z+\mathsf{s}\im )}{\operatorname{f}(z)}
=\frac{\operatorname{f}(z+\mathsf{b}^{-1}\im M)}{\operatorname{f}(z)}=
\operatorname{g^{--}}_{M}\!\left(e^{2\pi\mathsf{b}^{-1}z};q_-\right).
\end{equation}
Define
\begin{align}
\label{gx}\operatorname{g}\!\left(x\right) &= 
\operatorname{g^+}_{N}\!\left(x^{\frac1N};q_+\right)= 
\operatorname{g^{--}}_{M}\!\left(x^{\frac1M};q_-\right)
\\
\label{sxy}\operatorname{S}(x,y)&= 
\sum_{k=0}^{\mathsf{s}^2-1}\operatorname{g^+}_{kP}\!
\left(x;q_+\right)\operatorname{g^{--}}_{kQ}\!
\left(y;q_-\right)
\end{align}

\begin{lemma}
\label{lem.gg}
\rm{(a)}
We have $\operatorname{g}\!\left(x\right) \in\mathbb{C}(x)$ and
$\operatorname{S}(x,y) \in\mathbb{C}(x,y)$. 
\newline
\rm{(b)}
The function $\operatorname{S}(x,y) $ is independent of $P$ and $Q$ 
provided $x^N=y^M$.
\end{lemma}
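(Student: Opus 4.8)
\textbf{Proof proposal for Lemma~\ref{lem.gg}.}
The plan is to treat parts (a) and (b) separately but using the same structural identity, namely the cocycle/additivity relation
$\operatorname{r}_{k+l}(z;q)=\operatorname{r}_k(zq^l;q)\operatorname{r}_l(z;q)$
specialized to $q=q_\pm$, which are $\mathsf{s}^2=MN$-th roots of unity (indeed $q_+=\zeta_N^M$ has order $N$ and $q_-=\zeta_M^N$ has order $M$, so both are killed by raising to the power $MN$). For part (a), I would first observe that each $\operatorname{g^\pm}_k(x;q_\pm)$ is by construction a rational function of $x$, hence so is the finite sum $\operatorname{S}(x,y)$ and so is $\operatorname{g}(x)=\operatorname{g^+}_N(x^{1/N};q_+)$ once I check that the apparent fractional powers disappear. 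For the latter: writing $\operatorname{g^+}(x)=(-x)^A/(1-x)^B$ (or more abstractly any rational function), the product $\operatorname{g^+}_N(w;q_+)=\prod_{j=0}^{N-1}\operatorname{g^+}(wq_+^j)$ is a symmetric function of $w,q_+w,\dots,q_+^{N-1}w$, hence a rational function of $w^N$ and $q_+$; substituting $w=x^{1/N}$ gives a rational function of $x$ with no branch ambiguity. The same argument on the $M$-side gives the identity $\operatorname{g^+}_N(x^{1/N};q_+)=\operatorname{g^{--}}_M(x^{1/M};q_-)$ claimed in the displayed equation, via the two factorizations of $\operatorname{f}(z+\mathsf{s}\im)/\operatorname{f}(z)$ through $\mathsf{b}\im N=\mathsf{s}\im=\mathsf{b}^{-1}\im M$, after noting both sides are rational in the respective variables so the equality of meromorphic functions forces equality of rational functions.

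For part (b), suppose $MP+NQ=1$ and also $MP'+NQ'=1$ for a second choice; then $M(P-P')=-N(Q-Q')=:MNt$ for some integer $t$, i.e.\ $P'=P-Nt$ and $Q'=Q+Mt$. I would then compute, using the cocycle identity and the quasi-periodicities $\operatorname{g^+}_{k+N}(x;q_+)=\operatorname{g^+}_k(x;q_+)\operatorname{g^+}_N(x;q_+)$ and $\operatorname{g^{--}}_{k+M}(y;q_-)=\operatorname{g^{--}}_k(y;q_-)\operatorname{g^{--}}_M(y;q_-)$, how the $k$-th summand transforms. The key point is that replacing $(P,Q)$ by $(P-Nt,Q+Mt)$ changes $kP$ by $-kNt$ and $kQ$ by $+kMt$, and moving the index by a multiple of $N$ (resp.\ $M$) introduces exactly the factor $\operatorname{g^+}_N(x;q_+)^{-kt}$ (resp.\ $\operatorname{g^{--}}_M(y;q_-)^{+kt}$), together with shifts of the base point $x\mapsto xq_+^{\,\cdot}$ that I can absorb using $\operatorname{g^+}_N(xq_+;q_+)=\operatorname{g^+}_N(x;q_+)$ and its $M$-analogue. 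Under the hypothesis $x^N=y^M$ we have $\operatorname{g}(x^N)=\operatorname{g^+}_N(x;q_+)=\operatorname{g^{--}}_M(y;q_-)=\operatorname{g}(y^M)$, so these two compensating factors are literally equal and cancel in the product inside each summand; hence each summand is unchanged and the full sum is independent of the choice, provided I also check that the summation range $0\le k\le MN-1$ is a complete set of residues that is preserved (again using $N$- and $M$-periodicity to reindex).

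The step I expect to be the main obstacle is the bookkeeping in part (b): carefully tracking the base-point shifts produced by the cocycle identity when one re-expands $\operatorname{g^+}_{k(P-Nt)}$ in terms of $\operatorname{g^+}_{kP}$ and $\operatorname{g^+}_{-kNt}$, and then seeing that the residual $q_+$-power shift of the argument is harmless because of the invariance $\operatorname{r}_N(zq;q)=\operatorname{r}_N(z;q)$ for $q^N=1$. A clean way to organize this, which I would adopt, is to prove an intermediate identity expressing $\operatorname{g^+}_{kP}(x;q_+)\operatorname{g^{--}}_{kQ}(y;q_-)$ for the shifted pair as the unshifted expression times $\bigl(\operatorname{g^{--}}_M(y;q_-)/\operatorname{g^+}_N(x;q_+)\bigr)^{kt}$ up to argument shifts by roots of unity, and then invoke $x^N=y^M$ and the root-of-unity invariance to conclude the ratio is $1$. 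Part (a)'s rationality and the two-fold factorization of $\operatorname{g}$ are then essentially formal, so the whole proof rests on this one computation.
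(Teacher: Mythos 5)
Your proposal is correct and follows essentially the same route as the paper: rationality of $\operatorname{g}$ via invariance of $\operatorname{g^+}_N(w;q_+)$ under $w\mapsto q_+w$ (equivalently, symmetry in the orbit $w,q_+w,\dots,q_+^{N-1}w$), and independence of $(P,Q)$ by extracting the factor $\bigl(\operatorname{g^+}_N(x;q_+)/\operatorname{g^{--}}_M(y;q_-)\bigr)^{\pm kt}$ from each summand and noting it equals $1$ when $x^N=y^M$. The bookkeeping you flag as the main obstacle is in fact immediate: since $q_+^N=1$, the cocycle relation gives $\operatorname{r}_{k+N}(z;q_+)=\operatorname{r}_k(zq_+^N;q_+)\operatorname{r}_N(z;q_+)=\operatorname{r}_k(z;q_+)\operatorname{r}_N(z;q_+)$ with no residual base-point shift, and no reindexing of the sum over $k$ is needed because each summand is individually unchanged.
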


\begin{proof}
Since
\begin{equation}
\operatorname{g^+}_{N}\!\left(xq_+;q_+\right)
=\operatorname{g^+}_{N}\!\left(x;q_+\right),\qquad 
\operatorname{g^{--}}_{M}\!\left(xq_-;q_-\right)
=\operatorname{g^{--}}_{M}\!\left(x;q_-\right),
\end{equation}
it follows that $\operatorname{g}\!\left(x\right) \in\mathbb{C}(x)$, and
consequently, $\operatorname{S}(x,y) \in\mathbb{C}(x,y)$.
 
For (b), let $P', Q'$ be another pair satisfying the equation $MP'+NQ'=1$. 
Then there exists an integer $R$ such that $P'=P+RN$ and $Q'=Q-RM$. Denoting 
the function \eqref{sxy} with $P$ and $Q$ replaced by $P'$ and $Q'$ as 
$\operatorname{S}'(x,y)$, we have
\begin{align*}
 \operatorname{S}'(x,y) 
&=\sum_{k=0}^{\mathsf{s}^2-1}\operatorname{g^+}_{kP'}\!
\left(x;q_+\right)\operatorname{g^{--}}_{kQ'}\!
\left(y;q_-\right)
=\sum_{k=0}^{\mathsf{s}^2-1}\operatorname{g^+}_{kP+kRN}\!
\left(x;q_+\right)\operatorname{g^{--}}_{kQ-kRM}\!
\left(y;q_-\right)\\
&=\sum_{k=0}^{\mathsf{s}^2-1}\operatorname{g^+}_{kP}\!
\left(x;q_+\right) \operatorname{g^+}_{N}\!
\left(x;q_+\right)^{kR}\operatorname{g^{--}}_{kQ}\!
\left(y;q_-\right)\operatorname{g^{--}}_{M}\!
\left(y;q_-\right)^{-kR}\\
&=\sum_{k=0}^{\mathsf{s}^2-1}\operatorname{g^+}_{kP}\!
\left(x;q_+\right) \left(\frac{\operatorname{g^+}_{N}\!
\left(x;q_+\right)}{\operatorname{g^{--}}_{M}\!
\left(y;q_-\right)}\right)^{kR}\operatorname{g^{--}}_{kQ}\!
\left(y;q_-\right)
=\sum_{k=0}^{\mathsf{s}^2-1}\operatorname{g^+}_{kP}\!
\left(x;q_+\right)\operatorname{g^{--}}_{kQ}\!
\left(y;q_-\right) \\
&=\operatorname{S}(x,y)
\end{align*}
where we used the second equality in \eqref{gx}.
\end{proof}
For a complex number $x$, we denote 
$\mathcal{C}_x= x\im/\mathsf{s}+\mathbb{R}\subset \BC$.
\begin{lemma}
\label{lem.main} Let $\operatorname{f}(z)$, 
$\operatorname{g}\!\left(z\right)$, $\operatorname{S}\!\left(x,y\right)$ 
be as above and $\lambda$ a real number in general position such that
the form $\operatorname{f}(z)\operatorname{d}\!z$ is absolutely integrable 
along 
$\mathcal{C}_{\lambda}$,
\begin{equation}
\label{cii}
\lim_{x\to\pm\infty}
\sup_{y\in \left[{\lambda\over \mathsf{s}},{\lambda\over \mathsf{s}}+\mathsf{s}\right]}|
\operatorname{f}(x+\im y)|=0,
\end{equation}
and
\begin{equation}
\label{ciii}
\operatorname{g}\!\left(0\right)\ne 1\ne \operatorname{g}\!\left(\infty\right).
\end{equation}
Then  the following equalities hold
\begin{align}
\label{eq.main}
\int_{\mathcal{C}_{\lambda}}\operatorname{f}(z)\operatorname{d}\!z &=
\left(\int_{\mathcal{C}_{\lambda}}-\int_{\mathcal{C}_{\lambda+1}}\right)
\frac{\operatorname{f}(z)\operatorname{S}\!
\left(e^{2\pi\mathsf{b}z},e^{2\pi\mathsf{b}^{-1}z}\right)}{1-\operatorname{g}\!
\left(e^{2\pi\mathsf{s}z}\right)}\operatorname{d}\!z \\ \notag
& =2\pi\im\sum_{0<\mathsf{s}\mathrm{Im}\alpha-\lambda<1}\operatorname{Res}_{z=\alpha}
\frac{\operatorname{f}(z)\operatorname{S}\!
\left(e^{2\pi\mathsf{b}z},e^{2\pi\mathsf{b}^{-1}z}\right)}{1-\operatorname{g}\!
\left(e^{2\pi\mathsf{s}z}\right)} \,.
\end{align}
\end{lemma}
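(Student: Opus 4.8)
The plan is to derive the chain of equalities in \eqref{eq.main} by iterating the elementary two-term identity of Lemma~\ref{lem1}, applied in two independent directions — along the periods $\mathsf{b}\im$ and $\mathsf{b}^{-1}\im$ — and then collapse the telescoping sum into a single contour integral over $\mathcal{C}_\lambda - \mathcal{C}_{\lambda+1}$, which by the residue theorem picks up exactly the poles in the strip $0 < \mathsf{s}\,\mathrm{Im}(\alpha) - \lambda < 1$. The geometric point is that $\mathcal{C}_\lambda$ and $\mathcal{C}_{\lambda+1}$ are horizontal lines whose imaginary parts differ by $1/\mathsf{s}$, i.e.\ by one full period of $e^{2\pi\mathsf{s}z}$, while the lattice of periods of $\operatorname{f}$ generated by $\mathsf{b}\im$ and $\mathsf{b}^{-1}\im$ has the sublattice $\mathsf{s}\im\BZ = \mathsf{b}N\im\BZ = \mathsf{b}^{-1}M\im\BZ$ acting by the rational factor $\operatorname{g}$. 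So the difference of the two line integrals closes up a fundamental strip for that sublattice.

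First I would set $\operatorname{h}(z) = \operatorname{f}(z)\,\operatorname{S}(e^{2\pi\mathsf{b}z}, e^{2\pi\mathsf{b}^{-1}z})/(1-\operatorname{g}(e^{2\pi\mathsf{s}z}))$, and establish the quasi-periodicity $\operatorname{h}(z + \mathsf{b}^{\pm1}\im) = \operatorname{h}(z) + (\text{something telescoping})$. Concretely, using $\operatorname{f}(z+\mathsf{b}^{\pm1}\im)/\operatorname{f}(z) = \operatorname{g}^\pm(e^{2\pi\mathsf{b}^{\pm1}z})$ and the cocycle relation $\operatorname{g}^\pm_{k+1}(x;q) = \operatorname{g}^\pm(xq^k)\,\operatorname{g}^\pm_k(x;q)$, one checks that shifting $z \mapsto z + \mathsf{b}\im$ reindexes the sum $\operatorname{S}$ by $P$ and shifting by $\mathsf{b}^{-1}\im$ reindexes by $Q$; since $MP + NQ = 1$, the combined shift by $\mathsf{s}\im$ reindexes by $1$. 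The invariance $\operatorname{g}_N^+(xq_+;q_+) = \operatorname{g}_N^+(x;q_+)$ (and its $M$-analogue) proved in Lemma~\ref{lem.gg} guarantees the denominator $1 - \operatorname{g}(e^{2\pi\mathsf{s}z})$ is genuinely $\mathsf{s}\im$-periodic, so the whole ratio $\operatorname{h}$, after the reindexing, differs from itself under $z \mapsto z + \mathsf{s}\im$ by a sum of $\mathsf{s}^2$ copies of $\operatorname{f}$ along consecutive translates — precisely the telescoping that recovers $\int_{\mathcal{C}_\lambda}\operatorname{f}$. This is the natural higher-rank generalization of the computation displayed in the proof of Lemma~\ref{lem1}; I would organize it so that the one-dimensional identity is applied $\mathsf{s}^2$ times in a fan of parallel segments tiling the strip between $\mathcal{C}_\lambda$ and $\mathcal{C}_{\lambda+1}$.

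For the second equality I would close the contour: the region between $\mathcal{C}_\lambda$ and $\mathcal{C}_{\lambda+1}$ is an infinite horizontal strip, and hypothesis \eqref{cii} forces $\operatorname{f}$ — hence $\operatorname{h}$, since $\operatorname{S}$ is a fixed rational function bounded on the strip away from the finitely many poles per period of $\operatorname{g}$, and $1-\operatorname{g}$ is bounded below there by \eqref{ciii} combined with genericity of $\lambda$ — to decay as $\mathrm{Re}(z) \to \pm\infty$. So the vertical sides contribute nothing, and $\left(\int_{\mathcal{C}_\lambda} - \int_{\mathcal{C}_{\lambda+1}}\right)\operatorname{h}(z)\,dz = 2\pi\im\sum \operatorname{Res}$ over the poles of $\operatorname{h}$ strictly inside the strip, which are exactly those $\alpha$ with $0 < \mathsf{s}\,\mathrm{Im}(\alpha) - \lambda < 1$. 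The genericity of $\lambda$ (``in general position'') is what keeps poles off the two boundary lines and keeps $\operatorname{g}(e^{2\pi\mathsf{s}z}) \neq 1$ on them.

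\textbf{The main obstacle} I anticipate is the bookkeeping in the reindexing step — verifying that the shift by $\mathsf{b}\im$ really sends $\operatorname{g}^+_{kP}\operatorname{g}^-_{kQ}$ to $\operatorname{g}^+_{(k+1)P}\operatorname{g}^-_{(k+1)P \cdot (Q/P)}$-type terms in a way that telescopes cleanly modulo $\mathsf{s}^2$, and that the ``boundary terms'' of the telescope assemble to exactly $\operatorname{f}(z)$ rather than to $\operatorname{f}$ shifted, all while the constraint $x^N = y^M$ (needed for $P,Q$-independence, by Lemma~\ref{lem.gg}(b)) is automatically satisfied on the contour since $x = e^{2\pi\mathsf{b}z}$, $y = e^{2\pi\mathsf{b}^{-1}z}$ give $x^N = e^{2\pi\mathsf{s}z} = y^M$. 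Managing the two indices $kP \bmod N$ and $kQ \bmod M$ simultaneously via the Chinese Remainder structure of $\BZ/\mathsf{s}^2\BZ \cong \BZ/N \times \BZ/M$ is the delicate point; a clean way is to do the two period directions one at a time (first telescope in the $\mathsf{b}\im$-direction collapsing the $P$-part, then in the $\mathsf{b}^{-1}\im$-direction for the $Q$-part), invoking the one-dimensional Lemma~\ref{lem1} at each stage.
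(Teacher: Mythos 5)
Your overall strategy is the paper's: a single application of Lemma~\ref{lem1} with the period $a=\mathsf{s}\im$, a telescoping/slicing step that manufactures the state-sum $\operatorname{S}$ out of the ratios $\operatorname{f}(z+k\im/\mathsf{s})/\operatorname{f}(z)=\operatorname{g}^+_{kP}(e^{2\pi\mathsf{b}z};q_+)\,\operatorname{g}^-_{kQ}(e^{2\pi\mathsf{b}^{-1}z};q_-)$ (this is exactly where $MP+NQ=1$ enters, as you say), and the residue theorem justified by the decay \eqref{cii} together with \eqref{ciii}. Two details as written would need repair. First, you have the two periods interchanged in several places: the elementary shift that ``reindexes $\operatorname{S}$ by $1$'' is $\im/\mathsf{s}=P\mathsf{b}\im+Q\mathsf{b}^{-1}\im$ (the gap between $\mathcal{C}_\lambda$ and $\mathcal{C}_{\lambda+1}$), not $\mathsf{s}\im$; and the strip that gets tiled by $\mathsf{s}^2$ parallel slices is the \emph{tall} one of height $\mathsf{s}$ between $\mathcal{C}_\lambda$ and $\mathcal{C}_\lambda+\mathsf{s}\im=\mathcal{C}_{\lambda+\mathsf{s}^2}$ produced by Lemma~\ref{lem1}, each slice then being translated back into the thin strip between $\mathcal{C}_\lambda$ and $\mathcal{C}_{\lambda+1}$ using the $\im/\mathsf{s}$-periodicity of the denominator $1-\operatorname{g}(e^{2\pi\mathsf{s}z})$; that translation is what accumulates the sum \eqref{Sh}. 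Second, the ``clean way'' you propose at the end --- invoking Lemma~\ref{lem1} separately in the $\mathsf{b}\im$- and $\mathsf{b}^{-1}\im$-directions --- does not go through: hypothesis \eqref{per} with $a=\mathsf{b}\im$ would force the multiplier $\operatorname{g}^+(e^{2\pi\mathsf{b}z})$ to be $\mathsf{b}\im$-periodic, i.e.\ $\operatorname{g}^+(x)=\operatorname{g}^+(q_+x)$, which fails in general for $N>1$ since $q_+$ is a primitive $N$-th root of unity and $\operatorname{g}^+$ is not a rational function of $x^N$. Only the combined shift by $\mathsf{s}\im$ (equivalently $N$ steps of $\mathsf{b}\im$, or $M$ steps of $\mathsf{b}^{-1}\im$) has an honestly periodic multiplier, which is why the paper runs Lemma~\ref{lem1} exactly once with that period and does all of the $\BZ/N\times\BZ/M$ bookkeeping inside the identity \eqref{Sh} rather than by iterating Lemma~\ref{lem1} in each lattice direction.
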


\begin{proof}
Let us derive the first equality. We denote
\begin{equation}
q_\pm= e^{2\pi\im\mathsf{b}^{\pm2}},
\end{equation}
so that we have
\begin{equation}
q_+=e^{2\pi\im\frac MN},\quad q_-=e^{2\pi\im\frac NM}.
\end{equation}
We also have
\begin{equation}
\operatorname{f}(z+k\mathsf{b}^{\pm1}\im)=\operatorname{f}(z)
\operatorname{g^\pm}_{k}\!\left(e^{2\pi\mathsf{b}^{\pm1}z};q_\pm\right), 
\quad \forall k\in\mathbb{Z}.
\end{equation}
In particular,
\begin{equation}
\operatorname{f}(z+\mathsf{s}\im)=\operatorname{f}(z+N\mathsf{b}\im)=
\operatorname{f}(z+M\mathsf{b}^{-1}\im)=
\operatorname{f}(z)\operatorname{g}\!\left(e^{2\pi\mathsf{s}z}\right).
\end{equation}
The function
\begin{equation}\label{hf}
\operatorname{h}(z)= \operatorname{f}\!
\left(\frac z{2\pi\mathsf{s}}\right)
\end{equation}
has the properties
\begin{equation}\label{g+g-}
\frac{\operatorname{h}(z+k2\pi \im)}{\operatorname{h}(z)}=
\operatorname{g^+}_{Pk}\!\left(e^{z/N};q_+\right)
\operatorname{g^{--}}_{Qk}\!\left(e^{z/M};q_-\right),\quad \forall k\in\mathbb{Z}.
\end{equation}
From equation~\eqref{g+g-}, it follows that
\begin{equation}\label{Sh}
\sum_{k=0}^{\mathsf{s}^2-1}
\frac{\operatorname{h}(z+k2\pi\im)}{\operatorname{h}(z)}=
\operatorname{S}\!\left(e^{z/N},e^{z/M}\right).
\end{equation}
As $\lambda$ is generic, the contour $\mathcal{C}_\lambda$ satisfies the 
conditions of Lemma~\ref{lem1} with $a=\mathsf{s}\im$.  Thus, we write
\begin{multline}\label{pro1}
\int_{\mathcal{C}_\lambda}\operatorname{f}(z)\operatorname{d}\!z=
\!\left(\int_{\mathcal{C}_\lambda}-\int_{\mathsf{s}\im+\mathcal{C}_\lambda}\right)
\frac{\operatorname{f}(z)}{1-\operatorname{g}\!\left(e^{2\pi\mathsf{s}z}\right)}
\operatorname{d}\!z \\
=\left(\int_{\mathcal{C}_{\lambda 2\pi\mathsf{s}}}
-\int_{\mathcal{C}_{(\lambda+\mathsf{s}^2) 2\pi\mathsf{s}}}\right)
\frac{\operatorname{h}(z)}{1-\operatorname{g}\!\left(e^{z}\right)}
\frac{\operatorname{d}\!z}{2\pi\mathsf{s}} \\
=\sum_{k=0}^{\mathsf{s}^2-1}\left(\int_{\mathcal{C}_{(\lambda +k)2\pi\mathsf{s}}}
-\int_{\mathcal{C}_{(\lambda+k+1) 2\pi\mathsf{s}}}\right)
\frac{\operatorname{h}(z)}{1-\operatorname{g}\!\left(e^{z}\right)}
\frac{\operatorname{d}\!z}{2\pi\mathsf{s}} \\
=\sum_{k=0}^{\mathsf{s}^2-1}\left(\int_{\mathcal{C}_{\lambda 2\pi\mathsf{s}}}
-\int_{\mathcal{C}_{(\lambda+1) 2\pi\mathsf{s}}}\right)
\frac{\operatorname{h}(z+k2\pi\im)}{1-\operatorname{g}\!
\left(e^{z}\right)}\frac{\operatorname{d}\!z}{2\pi\mathsf{s}} \\
=\left(\int_{\mathcal{C}_{\lambda 2\pi\mathsf{s}}}-
\int_{\mathcal{C}_{(\lambda+1) 2\pi\mathsf{s}}}\right)
\frac{\operatorname{h}(z)\operatorname{S}\!
\left(e^{z/N},e^{z/M}\right)}{1-\operatorname{g}\!\left(e^{z}\right)}
\frac{\operatorname{d}\!z}{2\pi\mathsf{s}} \\
=\left(\int_{\mathcal{C}_{\lambda}}-\int_{\mathcal{C}_{\lambda+1}}\right)
\frac{\operatorname{f}(z)\operatorname{S}\!
\left(e^{2\pi\mathsf{b}z},e^{2\pi\mathsf{b}^{-1}z}\right)}{1-\operatorname{g}\!
\left(e^{2\pi\mathsf{s}z}\right)}\operatorname{d}\!z.
\end{multline}

The second equality in Equation~\eqref{eq.main} follows from the residue 
theorem. To justify its application note that $\mathcal{C}_{\lambda}-
\mathcal{C}_{\lambda+1}$ union two vertical segments is the boundary of a disk.
Conditions~\eqref{cii}, 
\eqref{ciii} imply that for sufficiently large  negative or positive $x$ 
the integrand in last part of \eqref{pro1} is regular on the vertical segment 
$\calC'= x+\im[\lambda,\lambda+1]/\mathsf{s}$ and the following estimate holds 
\begin{align*}
\left|\int_{\calC'}{\operatorname{f}(z)\operatorname{S}\!
\left(e^{2\pi\mathsf{b}z},e^{2\pi\mathsf{b}^{-1}z}\right)\over 1-\operatorname{g}\!
\left(e^{2\pi\mathsf{s}z}\right)}\operatorname{d}\!z\right|
 & \le
\sum_{k=0}^{\mathsf{s}^2-1}\int_{\lambda}^{\lambda+1}
\left |{\operatorname{f}\left(x+\im {y+k\over\mathsf{s}}\right)\over1-\operatorname{g}\!
\left(e^{2\pi(\mathsf{s}x+\im y)}\right)} \right |{\operatorname{d}\!y\over \mathsf{s}}\\
 & \le
{1\over\mathsf{s}}\sum_{k=0}^{\mathsf{s}^2-1}
\sup_{y\in [\lambda,\lambda+1]}\left |{\operatorname{f}\left(x+\im {y+k\over\mathsf{s}}\right)\over1-\operatorname{g}\!
\left(e^{2\pi(\mathsf{s}x+\im y)}\right)} \right |\\
& \le
{1\over\mathsf{s}}\sup_{y\in [\lambda,\lambda+1]}{1\over\left|1-\operatorname{g}\!
\left(e^{2\pi(\mathsf{s}x+\im y)}\right) \right |} 
\sum_{k=0}^{\mathsf{s}^2-1}
\sup_{t\in [\lambda,\lambda+1]}\left |\operatorname{f}\left(x+\im {t+k\over\mathsf{s}}\right)\right| \\
& \le\mathsf{s}{\sup_{z\in x+\im\left[{\lambda\over\mathsf{s}},{\lambda\over\mathsf{s}}+\mathsf{s}\right]}\left|\operatorname{f}(z)\right|\over\inf_{|w|=e^{2\pi\mathsf{s}x}}\left|1-\operatorname{g}\!
\left(w\right) \right |}
\xrightarrow{|x|\to\infty} 0
\end{align*}
\end{proof}

\subsection{Applications to 1-dimensional state-integrals: 
proof of Theorem~\ref{thm.1}}
\label{sub.application}

In this section we prove Theorem~\ref{thm.1}. 
Fix integers $A$ and $B$ with $B>A>0$. The values of particular 
interest are $(A,B)=(1,2)$ and $(1,3)$ which correspond to the state-integrals
of the knots $4_1$ and $5_2$ respectively. For the
$4_1$ knot, see ~\cite[Eqn.38]{AK} and \cite[Eqn.47]{KLV} and 
\cite{state-integrals}. For the $5_2$ knot, see ~\cite[Eqn.53]{KLV}.
For the remaining values of $(A,B)$, although the 1-dimensional state integral
makes sense, and it can be analyzed using our methods, there is no 
corresponding knot that we know of.

If
\begin{equation}
\operatorname{f}(z-c_{\mathsf{b}})=\fad{\mathsf{b}}(z)^Be^{-A\pi\im z^2},
\quad c_{\mathsf{b}}=(\mathsf{b}+\mathsf{b}^{-1})\im/2,
\end{equation}
then
\begin{subequations}
\begin{align}
\label{gxAB}
\operatorname{g}\!\left(x\right) &= 
\left(-x \right)^A\! \left(1-x\right)^{-B}
\\
\operatorname{g^\pm}(x)&=\operatorname{g}\!\left(q_\pm x\right)
\\
\operatorname{g^\pm}_n(x;q_\pm)&=\left(-x \right)^{An}\!
q_\pm^{\frac{A}{2}n(n+1)}\!\left(q_\pm x;q_\pm\right)_{n}^{-B},\qquad 
\forall n\in\mathbb{Z}.
\end{align}
\end{subequations}
Observe that $\operatorname{f}(z)$ is non-vanishing and absolutely 
integrable along the line $\mathcal{C}_{\lambda}$ if
\begin{equation}
-(M+N)/2<\lambda<0,
\end{equation}
and $\operatorname{f}(z+\mathsf{s}\im)\ne \operatorname{f}(z)$ if 
$\lambda$ is in general position. 
By using~\eqref{eq.Phieval+}, we obtain that
\begin{align}
\label{fzAB}
\operatorname{f}(z)&=\fad{\mathsf{b}}(z+c_{\mathsf{b}})^B
e^{-A\pi\im (z+c_{\mathsf{b}})^2} \\ \notag 
&
=\frac{e^{\frac{\im B}{2\pi\mathsf{s}^2}\operatorname{Li_2}(e^{2\pi\mathsf{s}z})}
\left(1-e^{2\pi\mathsf{s}z}\right)^{B+B\im z\mathsf{s}^{-1}}}{
\slashed{\operatorname{D}}_N(e^{2\pi\mathsf{b}z};q_+)^B
\slashed{\operatorname{D}}_M(e^{2\pi\mathsf{b}^{-1}z};q_-)^B}
e^{-A\pi\im (z+c_{\mathsf{b}})^2}.
\end{align}
By using the identity
\begin{align*}
e^{-A\pi\im (z+c_{\mathsf{b}})^2} &=
e^{-A\pi\im \left(c_{\mathsf{b}}+\frac{\im}{2\mathsf{s}}\right)^2}
e^{A(2\pi\mathsf{s}z-\pi\im) (1-4\mathsf{s}\im c_{\mathsf{b}} -2\mathsf{s}\im z)\mathsf{s}^{-2}/4}
\\
&=e^{A\pi\im \frac{(M+N+1)^2}{4 M N}}
e^{A(2\pi\mathsf{s}z-\pi\im) (1-4\mathsf{s}\im c_{\mathsf{b}} -2\mathsf{s}\im z)\mathsf{s}^{-2}/4}
\end{align*}
and \eqref{gxAB}, we can rewrite \eqref{fzAB} in the form
\begin{align}
\label{eq.fzval}
\operatorname{f}(z)&=
\frac{e^{\frac{\im B}{2\pi \mathsf{s}^2}\operatorname{R}(e^{2\pi\mathsf{s}z})
(1-e^{2\pi\mathsf{s}z})^{\frac{(2N+1)(2M+1)}{4 M N}B}
}}{
\slashed{\operatorname{D}}_N(e^{2\pi\mathsf{b}z};q_+)^B
\slashed{\operatorname{D}}_M(e^{2\pi\mathsf{b}^{-1}z};q_-)^B} 
\,\, e^{\pi\im \frac{B+3A(M+N+1)^2}{12 M N}} 
\operatorname{g}\!\left(e^{2\pi\mathsf{s}z}
\right)^{\frac{1-4\mathsf{s}\im c_{\mathsf{b}} -2\mathsf{s}\im z}{4\mathsf{s}^{2}}},
\end{align}
where $R(x)$ is the Rogers dilogarithm~\eqref{eq.rogers}.

It is easy to see that the only singularities in Equation~\eqref{eq.main} 
are simple poles that come from solutions to the equation 
$1-\operatorname{g}\! \left(e^{2\pi\mathsf{s}z}\right)=1$.
Moreover, if $z=\alpha$ is a solution with 
$0<\mathsf{s}\, \mathrm{Im}\alpha-\lambda<1$,
then
$$
2\pi\im \operatorname{Res}_{z=\alpha}
\frac{\operatorname{f}(z)\operatorname{S}\!
\left(e^{2\pi\mathsf{b}z},e^{2\pi\mathsf{b}^{-1}z}\right)}{1-\operatorname{g}\!
\left(e^{2\pi\mathsf{s}z}\right)} =
\im^{-1} \mathsf{s}^{-1}
\frac{\operatorname{f}(\alpha)\operatorname{S}\!
\left(e^{2\pi\mathsf{b}\alpha},e^{2\pi\mathsf{b}^{-1}\alpha}\right)}{
e^{2\pi\mathsf{s}\alpha} \operatorname{g}'
\left(e^{2\pi\mathsf{s}\alpha}\right)} \,.
$$
Combining Lemma~\ref{lem.main} with Equation~\eqref{eq.fzval} concludes
the proof of Theorem~\ref{thm.1}.
\qed

\subsection{The case of the $(-2,3,7)$ pretzel knot}
\label{sub.pretzel}

The second author computed the state-integral invariant of the $(-2,3,7)$
pretzel knot. The result of the long computation is the following 
1-dimensional state integral, which we take as input to our analysis:
\begin{equation}
\label{eq.I237}
I_{(-2,3,7)}(b)=\int_{\BR + i \epsilon} \fad{\mathsf{b}}(x)^2 
\fad{\mathsf{b}}(2x-c_{\mathsf{b}}) e^{-2 \pi \im x^2} dx
\end{equation}
The integral is absolutely convergent, and the statement and proof of 
Theorem~\ref{thm.1} applies using the following definition of the
functions $\operatorname{f}(x)$, $\operatorname{g^\pm}_k(x,q)$ and 
$\operatorname{g}(x)$:
\begin{subequations}
\begin{align}
\operatorname{f}(x-c_{\mathsf{b}}) &= 
\fad{\mathsf{b}}(x)^2 \fad{\mathsf{b}}(2x-c_{\mathsf{b}})e^{-2 \pi \im x^2} \\
\operatorname{g^\pm}_k(x,q) &=  \frac{
q^{k(k+1)} \!x^{2k} }{\left(q x;q\right)^{2}_k\!\left(q x^2;q\right)_{2k}}   \\
\operatorname{g}(x) &= \frac{x^2}{\left(1-x\right)^2\!\left(1-x^2\right)^2} \,.
\end{align}
\end{subequations}
Observe that $\operatorname{f}(z)$ is non-vanishing and absolutely 
integrable along the line $\mathcal{C}_{\lambda}$ if
\begin{equation}
-(M+N)/4<\lambda<0,
\end{equation}
and $\operatorname{f}(z+\mathsf{s}\im)\ne \operatorname{f}(z)$ if 
$\lambda$ is in general position. 

We now discuss the solutions of the gluing equations $\operatorname{g}(x)=1$ and the
matching with the set of nonabelian parabolic $\PSL(2,\BC)$ representations,
illustrating Remark~\ref{rem.parabolic}.

The equation $\operatorname{g}(x)=1$ has 6 solutions that come from two cubic equations:
\begin{equation}
\label{eq.2cubic}
\frac{z}{(1-z^2)(1-z)} = \pm 1 \,.
\end{equation}
Each triple of solutions lies in number fields $F_+$ and $F_-$ 
of discriminant $-23$ and $49$ and type $[1,1]$ and $[3,0]$ respectively.

On the other hand, there are 6 nonabelian parabolic $\PSL(2,\BC)$ 
representations of the $(-2,3,7)$ pretzel knot. These may be found using
the Ptolemy methods of~\cite{ptolemy} and their {\tt snappy} 
implementation~\cite{snappy}. An alternative method is to use 
the $A$-polynomial of the pretzel knot from~\cite{Culler:Apoly}
$$
A(m,l)=l^6 - l^5 m^8 + 2 l^5 m^9 - l^5 m^{10} - 2 l^4 m^{18} - l^4 m^{19} + 
l^2 m^{36} + 2 l^2 m^{37} + l m^{45} - 2 l m^{46} + l m^{47} - m^{55}
$$
Observe that $A(1,l)=(l-1)^3(l+1)^3$. Setting 
$(m,l)=(1+t,\pm 1+c_{\pm} t + O(t^2))$
we obtain that
$$
-6119 + 2012 c_- - 220 c_-^2 + 8 c_-^3 =0, \qquad
-6193 - 2020 c_+ - 220 c_+^2 - 8 c_+^3 =0
$$
Then, we have $F_\pm=\BQ(c_\pm)$. If $z$ is a solution to~\eqref{eq.2cubic},
let $\rho_z$ denote the corresponding nonabelian parabolic $\PSL(2,\BC)$
representation. The Rogers dilogarithm of $z$ agrees with the complex volume
of $\rho_z$, and $\operatorname{g}'(x)$ agrees with the 1-loop invariant of $\rho_z$.

Incidentally, if $z \in F_+$, a totally real field, then the corresponding
triple of elements of the Bloch group is torsion and triple of complex volumes
is given by 
$$
\left(\mathsf{e}\left(-\frac{19}{42}\right), 
\mathsf{e}\left(-\frac{13}{42}\right),
\mathsf{e}\left(\frac{11}{42}\right) \right)= 
\mathsf{e}\left(-\frac{19}{42}\right) 
\left(1, 
\mathsf{e}\left(\frac{1}{7}\right),
\mathsf{e}\left(-\frac{2}{7}\right) \right)
\,,
$$ 
where $\mathsf{e}(x)$ is given by Equation~\eqref{eq.ex}. 


\section{Proof of Theorem \ref{thm.2}}
We start by taking the logarithmic derivative of Faddeev's quantum dilogarithm
\begin{align}
\frac{\partial}{\partial x}\log \fad{\mathsf{b}}(x)
& =\int_{\mathbb{R}+\im\epsilon}
\frac{-2\im e^{-2\im xz}}{4\sinh(z\mathsf{b})\sinh(z\mathsf{b}^{-1})}
\operatorname{d}\!z \\ \notag & 
=\int_{\mathbb{R}+\im\epsilon}
\frac{ e^{-2\im xz}}{2\im \sinh(z\mathsf{b})\sinh(z\mathsf{b}^{-1})}
\operatorname{d}\!z\\ \notag &
=\int_{\mathbb{R}+\im\epsilon}
\frac{\pi\mathsf{s}e^{-2\pi\im \mathsf{s}xz}}{2\im \sinh(\pi z\mathsf{b}\mathsf{s})
\sinh(\pi z\mathsf{b}^{-1}\mathsf{s})}\operatorname{d}\!z \\ \notag &
=\int_{\mathbb{R}+\im\epsilon}
\frac{ \pi\mathsf{s}e^{-2\pi\im \mathsf{s}xz}}{2\im \sinh(\pi zM)\sinh(\pi zN)}
\operatorname{d}\!z \,.
\end{align}
After rescaling $x\mapsto \frac{x}{2\pi\mathsf{s}}$ we obtain
\begin{equation}
\label{thm1.8-1}
4\im \frac{\partial}{\partial x}
\log \fad{\mathsf{b}}\left(\frac{x}{2\pi\mathsf{s}}\right)
=\int_{\mathbb{R}+\im\epsilon}
\frac{e^{-\im xz}}{ \sinh(\pi zM)\sinh(\pi zN)}\operatorname{d}\!z\,.
\end{equation}
The integrand in \eqref{thm1.8-1}, given by the function
\begin{equation}
\operatorname{f}(z)= \frac{e^{-\im xz}}{ \sinh(\pi zM)\sinh(\pi zN)}
\end{equation}
satisfies Equation~\eqref{per} with $a=\im$ as a direct consequence of the 
equalities
\begin{equation}
\frac{\operatorname{f}(z\pm\im)}{\operatorname{f}(z)}=(-1)^{M+N} e^{\pm x} \,.
\end{equation}
Equation~\eqref{residue1} and an application of Cauchy's residue
theorem implies that
\begin{align}
\notag
\frac{2}{\pi}\left(1-(-1)^{M+N} e^{x}\right)
\frac{\partial}{\partial x}
\log \fad{\mathsf{b}}\left(\frac{x}{2\pi\mathsf{s}}\right)
&=\frac{1}{2\pi\im}\left( 
\int_{\mathbb{R}+\im\epsilon}-\int_{\mathbb{R}+\im(1+\epsilon)}\right)
\operatorname{f}(z)\operatorname{d}\!z\\ \label{eq.Phi123} &
=
S_1(z) + S_2(z) + S_3(z)\,,
\end{align}
where 
$$
S_1 = \sum_{m=1}^{M-1}\operatorname{Res}_{z=\im \frac mM} 
\operatorname{f}(z), \qquad
S_2 = \sum_{n=1}^{N-1}\operatorname{Res}_{z=\im \frac nN}
\operatorname{f}(z), \qquad
S_3 = \operatorname{Res}_{z=\im}
\operatorname{f}(z) \,.
$$
So, we have reduced the integrals to the sum of residues. Our next task is
to calculate each residue. Let us introduce $C_i$ for $i=1,2,3$ by:
\begin{align*}
C_1 &=\frac{\left(1-e^{x+\pi\im(M+N)}\right)^{\frac{M-1}{2M}}}{
D_M\left(e^{(x+\pi\im(M+N))/M}; e^{2\pi \im  N/M}\right)} \\
C_2 &=\frac{\left(1-e^{x+\pi\im(M+N)}\right)^{\frac{N-1}{2N}}}{
D_N\left(e^{(x+\pi\im(M+N))/N}; e^{2\pi \im  M/N}\right)} \\
C_3 &=\left(1-(-1)^{M+N}e^x\right)^{\frac{\im x}{2\pi\mathsf{s}^2}} 
e^{\frac{\im}{2\pi\mathsf{s}^2}\operatorname{Li}_2\left((-1)^{M+N}e^x\right)} \,.
\end{align*}

\begin{lemma}
\label{lem.SC}
For $i=1,2,3$ we have:
\begin{equation}
\label{eq.SC}
S_i = \frac{2}{\pi}\left(1-(-1)^{M+N} e^{x}\right)
\frac{\partial}{\partial x} \log C_i \,.
\end{equation}
\end{lemma}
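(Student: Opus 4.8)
The plan is to treat the three cases separately, because $S_1,S_2$ are sums of residues at \emph{simple} poles while $S_3$ is a residue at a \emph{double} pole, and — crucially — to verify each claimed identity only after applying $\partial/\partial x\,\log(\cdot)$. This is what makes the statement manageable: although $C_1$ and $C_2$ involve fractional powers (an $M$-th, resp.\ $N$-th, root through $\operatorname{D}_M$, $\operatorname{D}_N$ and an extra $(M-1)/(2M)$ power of $1-e^{x+\pi\im(M+N)}$), the quantities $\partial_x\log C_i$ are honest rational functions of $e^{x/M}$, $e^{x/N}$, $e^x$, so no branch bookkeeping is needed. Recall that the poles of $\operatorname{f}(z)=e^{-\im xz}/(\sinh(\pi zM)\sinh(\pi zN))$ picked up by $\mathcal C_\lambda-\mathcal C_{\lambda+1}$ are: the simple poles $z=\im m/M$, $1\le m\le M-1$ (simple since $\gcd(M,N)=1$ forces $\sin(\pi mN/M)\ne0$), producing $S_1$; the simple poles $z=\im n/N$, $1\le n\le N-1$, producing $S_2$; and the double pole $z=\im$, where both hyperbolic sines vanish, producing $S_3$.

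\emph{Case $i=3$.} Near $z=\im$ write $z=\im+t$ and use $\sinh(\pi(\im+t)M)=(-1)^M\sinh(\pi tM)$ (and likewise for $N$), so that
\[
\operatorname{f}(\im+t)=\frac{(-1)^{M+N}e^{x}\,\bigl(1-\im xt+O(t^2)\bigr)}{\pi^2 MN\,t^2\,\bigl(1+O(t^2)\bigr)},
\]
giving $S_3=\operatorname{Res}_{z=\im}\operatorname{f}=-\dfrac{\im x}{\pi^2 MN}(-1)^{M+N}e^{x}$. On the other side, set $w=(-1)^{M+N}e^{x}$ so that $\partial_x w=w$; using $\dfrac{d}{dw}\Li_2(w)=-\dfrac{\log(1-w)}{w}$ the two $\log(1-w)$ contributions to $\partial_x\log C_3$ cancel, leaving $\partial_x\log C_3=-\dfrac{\im x w}{2\pi\mathsf{s}^2(1-w)}$. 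Since $\mathsf{s}^2=MN$, multiplying by $\tfrac2\pi(1-w)$ reproduces $S_3$ exactly.

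\emph{Cases $i=1,2$ (interchanged by $M\leftrightarrow N$).} A one-line residue computation gives $\operatorname{Res}_{z=\im m/M}\operatorname{f}=e^{xm/M}/\bigl(\pi M(-1)^m\im\sin(\pi mN/M)\bigr)$. Writing $q_-=e^{2\pi\im N/M}$ and $u=e^{(x+\pi\im(M+N))/M}$ and expressing $\sin(\pi mN/M)$ through $q_-$, the combination $(-1)^m q_-^{m/2}e^{xm/M}$ collapses to $u^m$, so that $S_1=-\dfrac{2}{\pi M}\sum_{m=1}^{M-1}\dfrac{u^m}{1-q_-^m}$. On the cyclic side, $C_1^{M}=(1-u^M)^{(M-1)/2}\prod_{k=1}^{M-1}(1-q_-^k u)^{-k}$, and with $\partial_x u=u/M$, $\partial_x u^M=u^M=:w$ one obtains $\partial_x\log C_1=-\dfrac{(M-1)w}{2M(1-w)}+\dfrac{1}{M^2}\sum_{k=1}^{M-1}\dfrac{k q_-^k u}{1-q_-^k u}$. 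Hence $S_1=\tfrac2\pi(1-w)\partial_x\log C_1$ is equivalent to the finite rational-function identity
\[
\sum_{m=1}^{M-1}\frac{u^m}{1-q_-^m}=\frac{(M-1)u^M}{2}-\frac{1-u^M}{M}\sum_{k=1}^{M-1}\frac{k q_-^k u}{1-q_-^k u},
\]
with $q_-$ a primitive $M$-th root of unity.

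This last identity is the one genuinely combinatorial step, and I expect it to be the main (mild) obstacle. I would derive it from the elementary relation $\dfrac{1}{1-q_-^m}=-\dfrac1M\sum_{l=0}^{M-1}l\,q_-^{ml}$, valid for $M\nmid m$ (differentiate $\sum_{l=0}^{M-1}y^l$ at $y=q_-^m$ and use $q_-^{mM}=1$): substitute it into the left-hand sum, interchange the $m$- and $l$-summations, evaluate the inner geometric sum $\sum_{m=0}^{M-1}(q_-^l u)^m=\dfrac{1-u^M}{1-q_-^l u}$, use $\sum_{l=0}^{M-1}l=\tfrac{M(M-1)}{2}$, and split $\dfrac{l}{1-q_-^l u}=l+\dfrac{l q_-^l u}{1-q_-^l u}$; the pieces reassemble into the right-hand side. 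Applying the symmetric argument with $M\leftrightarrow N$ handles $i=2$, and together with the computations above this proves the Lemma. Everything outside this one identity is a routine residue evaluation and an elementary differentiation.
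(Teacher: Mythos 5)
Your proposal is correct and takes essentially the same route as the paper: the same residue computations at the simple poles $\im m/M$, $\im n/N$ and the double pole $\im$, the same direct differentiation of $\log C_i$, and your ``genuinely combinatorial step'' is exactly the paper's Lemma~\ref{lemim}, which you prove by the same interchange of summations and the same root-of-unity evaluation $\sum_{l=0}^{M-1} l q^{ml}=-M/(1-q^m)$, merely run from the left-hand side instead of the right.
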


\begin{proof}
First we compute $S_1$. Expanding in powers of $z$ around $z=0$, we have
\begin{equation}
\operatorname{f}\left(z+\im \frac mM\right) 
=\frac{(-1)^me^{mx/M}}{\pi z M\im\sin(\pi m N/M)}\left(1+\mathrm{O}(z)\right)=
\frac{(-1)^me^{mx/M}}{\pi z M\im\sin(\pi m N/M)}+\mathrm{O}(1)
\end{equation}
so that
\begin{align*}
\operatorname{Res}_{z=\im \frac mM}\operatorname{f}(z)
&=\frac{\left(-e^{x/M}\right)^m}{\pi\im M\sin(\pi m N/M)}
=\frac{2\left(-e^{x/M}\right)^m}{
\pi M\left(e^{\pi \im m N/M}-e^{-\pi \im m N/M}\right)}\\
&=\frac{-2\left(-e^{(x+\pi\im N)/M}\right)^m}{\pi M\left(1-e^{2\pi \im m N/M}
\right)}
=\frac{-2e^{m(x+\pi\im(M+N))/M}}{\pi M\left(1-e^{2\pi \im m N/M}\right)} \,.
\end{align*}
Now, by using Lemma~\ref{lemim} (see below), we calculate
\begin{multline*}
-\frac{\pi }2 \sum_{m=1}^{M-1}
\operatorname{Res}_{z=\im \frac mM}\operatorname{f}(z)=
M^{-1}\sum_{m=1}^{M-1} \frac{e^{m(x+\pi\im(M+N))/M}}{1-e^{2\pi \im m N/M}}\\
=\frac{M-1}{2M} e^{x+\pi\im(M+N)} +\left(1-e^{x+\pi\im(M+N)}\right)
\frac{\partial}{\partial x}
\log D_M\left(e^{(x+\pi\im(M+N))/M}; e^{2\pi \im  N/M}\right).
\end{multline*}
Finally observe that
$$
-\int_{-\infty}^x  \frac{e^{y+\pi\im(M+N)}}{ \left(1-e^{y+\pi\im(M+N)}\right)}
\operatorname{d}\! y
= \log \left(1-e^{x+\pi\im(M+N)}\right)
$$
This proves Equation~\eqref{eq.SC} for $i=1$. Interchanging $M$ with $N$ 
proves Equation~\eqref{eq.SC} for $i=2$.
Finally we compute $S_3$. Expanding in powers of $z$ around $z=0$, we have
\begin{align*}
(-1)^{M+N} e^{-x} \operatorname{f}(z+\im)
&=\operatorname{f}(z)
=\frac{1-\im xz +\mathrm{O}(z^2)}{
\pi z M(1+\mathrm{O}(z^2))\pi z N(1+\mathrm{O}(z^2))}\\
&=\frac{1-\im xz +\mathrm{O}(z^2)}{\pi^2 MN z^2}
=\frac{1}{\pi^2\mathsf{s}^2 z^2} 
-\frac{\im x}{\pi^2 \mathsf{s}^2 z} +\mathrm{O}(1)
\end{align*}
so that
\begin{equation*}
\operatorname{Res}_{z=\im}\operatorname{f}(z)
=\frac{(-1)^{1+M+N}\im xe^x}{\pi^2 \mathsf{s}^2}\,.
\end{equation*}
Now we calculate
\begin{align*}
2\pi \im \mathsf{s}^2\log C_3 
&=\int_{-\infty}^x\frac{(-1)^{M+N}ye^y\operatorname{d}\! y}{1-(-1)^{M+N} e^{y}}
=-\int_{-\infty}^xy\operatorname{d}\log\left(1-(-1)^{M+N}e^y\right)\\
&=-\left[y\log\left(1-(-1)^{M+N}e^y\right)\right]_{-\infty}^{x}
+\int_{-\infty}^x\log\left(1-(-1)^{M+N}e^y\right)\operatorname{d}\! y\\
&=
-x\log\left(1-(-1)^{M+N}e^x\right)+\int_{0}^{(-1)^{M+N}e^x}
\frac{\log(1-z)}{z}\operatorname{d}\! z\\
&=
-x\log\left(1-(-1)^{M+N}e^x\right)-\operatorname{Li}_2\left((-1)^{M+N}e^x\right)
\,.
\end{align*}
Equation~\eqref{eq.SC} follows for $i=3$.
\end{proof}

We now finish the proof of Theorem~\ref{thm.2}. Using
\begin{equation}
\lim_{x\to -\infty}\fad{\mathsf{b}}\!\left(x\right)=1
\end{equation}
it follows that
\begin{equation}
\label{eq.pfthm2}
\log\fad{\mathsf{b}}\!\left(\frac{x}{2\pi\mathsf{s}}\right)
=\int_{-\infty}^x \frac{\partial}{\partial y}\log\fad{\mathsf{b}}\!
\left(\frac{y}{2\pi\mathsf{s}}\right)\operatorname{d}\! y
\end{equation}
Combining the above with Equation~\eqref{eq.Phi123} and Lemma~\ref{lem.SC},
we obtain that
\begin{equation}
\fad{\mathsf{b}}\!\left(\frac{x}{2\pi\mathsf{s}}\right)=C_1C_2C_3
\end{equation}
Introduce a new variable $z$ related to $x$ by 
\begin{equation}
\frac{x}{2\pi\mathsf{s}}=\frac{z}{2\pi\mathsf{s}}-c_{\mathsf{b}} \,.
\end{equation}
In other words, we have
\begin{equation}
x=z-\pi\im(M+N) \,.
\end{equation}
Equation~\eqref{eq.pfthm2} implies that
\begin{multline}
\fad{\mathsf{b}}\!\left(\frac{z}{2\pi\mathsf{s}}-c_{\mathsf{b}}\right)
\operatorname{D}_N(e^{z/N};q_+)\operatorname{D}_M(e^{z/M};q_-)
e^{-\frac{\im}{2\pi\mathsf{s}^2}\operatorname{Li_2}(e^{z})}\\=
\left(1-e^z\right)^{\frac{M-1}{2M}+\frac{N-1}{2N}+\frac{\im (z-\pi\im(M+N))}{2\pi MN}}=
\left(1-e^z\right)^{1+\frac{\im z}{2\pi MN}}.
\end{multline}
This concludes the proof of Theorem~\ref{thm.2}.
\qed

\begin{lemma}
\label{lemim}
For any complex root of unity $q$ of order $M$, we have
\begin{equation}
\sum_{m=1}^{M-1}\frac{x^m}{1-q^m}
=\frac{M-1}{2}x^M+(1-x^M)x\frac{\partial}{\partial x}\log D_M(x; q).
\end{equation}
\end{lemma}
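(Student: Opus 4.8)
The plan is to compute the right-hand side directly from the product formula $D_M(x;q)=\prod_{k=1}^{M-1}(1-q^kx)^{k/M}$. Taking the logarithmic derivative gives
\[
x\frac{\partial}{\partial x}\log D_M(x;q)=-\frac1M\sum_{k=1}^{M-1}\frac{kq^kx}{1-q^kx}.
\]
The key point is that $q^M=1$, so $(q^kx)^M=x^M$ and the finite geometric series yields $\dfrac{1}{1-q^kx}=\dfrac{1}{1-x^M}\sum_{j=0}^{M-1}q^{jk}x^j$. Substituting this and multiplying by $1-x^M$ converts the second term on the right-hand side of the lemma into $-\dfrac1M\sum_{k=1}^{M-1}k\sum_{j=1}^{M}q^{jk}x^j$, where the $j$-sum runs up to $M$ because $q^{Mk}x^M=x^M$.

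Next I would interchange the two finite sums and evaluate the inner coefficient $c_j:=\sum_{k=1}^{M-1}kq^{jk}$. For $j=M$ we have $q^{jk}=1$, so $c_M=\sum_{k=1}^{M-1}k=M(M-1)/2$, and the corresponding term contributes $-\tfrac{M-1}{2}x^M$, precisely cancelling the explicit $\tfrac{M-1}{2}x^M$ in the statement. For $1\le j\le M-1$, write $\omega=q^j$, a root of unity with $\omega^M=1$ and --- here is where the hypothesis that $q$ has order exactly $M$ is used --- $\omega\ne1$; differentiating $\sum_{k=0}^{M-1}t^k=\frac{t^M-1}{t-1}$ and evaluating at $t=\omega$ (using $\omega^M=1$, $\omega^{M-1}=\omega^{-1}$) gives $c_j=\sum_{k=1}^{M-1}k\omega^k=\dfrac{M}{q^j-1}$. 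Hence the $j$-th term is $-\dfrac1M\cdot\dfrac{M}{q^j-1}\,x^j=\dfrac{x^j}{1-q^j}$, and summing over $1\le j\le M-1$ reproduces the left-hand side.

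The only mildly delicate computation is $c_j$ for $\omega=q^j\ne1$; the simplification of $\frac{Mt^{M-1}(t-1)-(t^M-1)}{(t-1)^2}$ at $t=\omega$ is the crux, and it is purely algebraic. Everything else --- the geometric-series substitution, the interchange of the two finite sums, and matching the $j=M$ term against the $\tfrac{M-1}{2}x^M$ term --- is routine bookkeeping, and there are no analytic issues since both sides are polynomials in $x$ (note $(1-x^M)/(1-q^kx)$ is a polynomial because $q$ is a primitive $M$-th root of unity). As a sanity check I would also verify the degree count: the left side has degree $M-1$, while a priori the right side has degree $\le M$, and the vanishing of its $x^M$ coefficient is exactly what the $j=M$ computation provides.
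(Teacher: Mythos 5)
Your proof is correct and follows essentially the same route as the paper's: take the logarithmic derivative of the product defining $D_M$, use $q^M=1$ to expand $(1-x^M)/(1-q^kx)$ as a finite geometric sum, interchange the two finite sums, and evaluate $\sum_{k=1}^{M-1}kq^{jk}$ by differentiating the geometric series — with the $j=M$ term producing the $\tfrac{M-1}{2}x^M$ and the terms $1\le j\le M-1$ producing the left-hand side. Your explicit remark that $q^j\ne 1$ for $1\le j\le M-1$ (where the order-$M$ hypothesis enters) is a point the paper leaves implicit.
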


\begin{proof}
We calculate
\begin{align*} 
(1-x^M) x\frac{\partial}{\partial x}\log D_M(x; q)
&= (1-x^M)x\frac{\partial}{\partial x}\sum_{m=1}^{M-1}\frac mM\log(1-xq^m)\\
&= -\frac{(1-x^M)x}{M}\sum_{m=1}^{M-1}\frac{mq^m}{1-xq^m}
=-\frac{x}{M}\sum_{m=1}^{M-1}\frac{mq^m\left(1-(xq^m)^M\right)}{1-xq^m} \\
&= -\frac{x}{M}\sum_{m=1}^{M-1}mq^m\sum_{n=0}^{M-1}\left(xq^m\right)^n
=-\frac{1}{M}\sum_{n=0}^{M-1}x^{n+1} \sum_{m=1}^{M-1}mq^{m(n+1)}\\
&= -\frac{1}{M}\sum_{n=1}^{M}x^{n} \sum_{m=1}^{M-1}mq^{mn}
=-\frac{M-1}2 x^M-\frac{1}{M}\sum_{n=1}^{M-1}x^{n} \sum_{m=1}^{M-1}mq^{mn} \,.
\end{align*}
To finish the proof, we do the final calculation
\begin{equation}
\sum_{m=1}^{M-1}mq^{mn}
=\left.t\frac{\partial}{\partial t}\sum_{m=1}^{M-1}t^m\right\vert_{t=q^n}
=\left.t\frac{\partial}{\partial t}\left(\frac{1-t^M}{1-t}-1\right)
\right\vert_{t=q^n}
=\left.\frac{-Mt^M}{1-t}\right\vert_{t=q^n}=\frac{-M}{1-q^n}.
\end{equation}
\end{proof}


\appendix

\section{Some useful properties of the quantum dilogarithm}
\label{sec.QDL}

The quantum dilogarithm $\fad{\mathsf{b}}(x)$ is defined by 
\cite{Faddeev}
\begin{equation}\label{fad}
\fad{\mathsf{b}}(x)
=\frac{(e^{2 \pi \mathsf{b} (x+c_{\mathsf{b}})};q)_\infty}{
(e^{2 \pi \mathsf{b}^{-1} (x-c_{\mathsf{b}})};\tq)_\infty} \,,
\end{equation}
where
$$
q=e^{2 \pi \im \mathsf{b}^2}, \qquad 
\tq=e^{-2 \pi \im \mathsf{b}^{-2}}, \qquad
c_{\mathsf{b}}=\frac{\im}{2}(\mathsf{b}+\mathsf{b}^{-1}), \qquad 
\mathrm{Im}(\mathsf{b}^2) >0. 
$$
An integral representation is given by 
$$
\fad{\mathsf{b}}(x)=\exp\left( \int_{\mathbb{R}+\im\epsilon}
\frac{e^{-2\im xz}}{4\sinh(z\mathsf{b})\sinh(z\mathsf{b}^{-1})}
\frac{\operatorname{d}\!z}{z} \right)
$$
in the strip $|\mathrm{Im} z| < |\mathrm{Im} c_{\mathsf{b}}|$. 
Remarkably, this function admits 
an extension to all values of $\mathsf{b}$ with 
$\mathsf{b}^2\not\in\mathbb{R}_{\le 0}$. $\fad{\mathsf{b}}(x)$ is 
a meromorphic function of $x$ with
$$
\text{poles:} \,\,\, c_{\mathsf{b}} + \im \BN \mathsf{b} + \im \BN \mathsf{b}^{-1},
\qquad
\text{zeros:} \,\, -c_{\mathsf{b}} - \im \BN \mathsf{b} 
- \im\BN \mathsf{b}^{-1} \,.
$$ 
The functional equation
$$
\fad{\mathsf{b}}(x) \fad{\mathsf{b}}(-x)=e^{\pi \im x^2} \fad{\mathsf{b}}(0)^2, 
\qquad
\fad{\mathsf{b}}(0)=\left(\frac{q}{\tilde q}\right)^{\frac{1}{48}} 
=e^{\pi\im\left(\mathsf{b}^2+\mathsf{b}^{-2}\right)/24}
$$
allows us to move $\fad{\mathsf{b}}(x)$ from the denominator to the numerator
of the integrand of a state-integral.

The asymptotics of the 
quantum dilogarithm are given by~\cite[App.A]{AK}
\begin{equation}
\label{eq.as}
\fad{\mathsf{b}}(x) \sim \begin{cases}
\fad{\mathsf{b}}(0)^2e^{\pi \im x^2} & \text{when} \quad \Re(x) \gg 0 \\
1             & \text{when} \quad \Re(x) \ll 0
\end{cases}
\end{equation}

The quantum dilogarithm is a quasi-periodic function. Explicitly,
it satisfies the equations
\begin{subequations}
\begin{align}
\label{eq.bshift}
\frac{\fad{\mathsf{b}}(x+c_{\mathsf{b}}+\im \mathsf{b})}{
\fad{\mathsf{b}}(x+c_{\mathsf{b}})} 
&= \frac{1}{1-q e^{2 \pi \mathsf{b} x}} 
\\ 
\label{eq.tbshift}
\frac{\fad{\mathsf{b}}(x+c_{\mathsf{b}}+\im\mathsf{b}^{-1})}{
\fad{\mathsf{b}}(x+c_{\mathsf{b}})} &= 
\frac{1}{1-\tq^{-1} e^{2 \pi \mathsf{b}^{-1} x}} \,.
\end{align}
\end{subequations}


\bibliographystyle{hamsalpha}
\bibliography{biblio}

\providecommand{\bysame}{\leavevmode\hbox to3em{\hrulefill}\thinspace}
\providecommand{\href}[2]{#2}
\providecommand{\eprint}{\begingroup \urlstyle{rm}\Url}
\begin{thebibliography}{AKMV05}

\bibitem[AK]{AK:complex}
J{\o}rgen~E. Andersen and Rinat~M. Kashaev, \emph{Complex {Q}uantum
  {C}hern-{S}imons}, \eprint{arXiv:1409.1208}, Preprint 2014.

\bibitem[AK14]{AK}
J{\o}rgen~Ellegaard Andersen and Rinat Kashaev, \emph{A {TQFT} from {Q}uantum
  {T}eichm\"uller theory}, Comm. Math. Phys. \textbf{330} (2014), no.~3,
  887--934.

\bibitem[AKMV05]{top-vertex}
Mina Aganagic, Albrecht Klemm, Marcos Mari{\~n}o, and Cumrun Vafa, \emph{The
  topological vertex}, Comm. Math. Phys. \textbf{254} (2005), no.~2, 425--478.

\bibitem[BB07]{BB}
Stephane Baseilhac and Riccardo Benedetti, \emph{Quantum hyperbolic geometry},
  Algebr. Geom. Topol. \textbf{7} (2007), 845--917.

\bibitem[CDW]{snappy}
Marc Culler, Nathan~M. Dunfield, and Jeffery~R. Weeks, \emph{{S}nap{P}y},
  \url{http://www.math.uic.edu/t3m/SnapPy}.

\bibitem[Cul]{Culler:Apoly}
Marc Culler, \emph{A table of {A}-polynomials},
  \url{http://www.math.uic.edu/Apolynomials}.

\bibitem[DG13]{DG}
Tudor Dimofte and Stavros Garoufalidis, \emph{The quantum content of the gluing
  equations}, Geom. Topol. \textbf{17} (2013), no.~3, 1253--1315.

\bibitem[DGLZ09]{DGLZ}
Tudor Dimofte, Sergei Gukov, Jonatan Lenells, and Don Zagier, \emph{Exact
  results for perturbative {C}hern-{S}imons theory with complex gauge group},
  Commun. Number Theory Phys. \textbf{3} (2009), no.~2, 363--443.

\bibitem[Dim14a]{Dimofte:complex}
Tudor Dimofte, \emph{Complex {C}hern-{S}imons theory at level {$k$} via the
  3d-3d correspondence}, 2014, \eprint{arXiv:1409.0857}, Preprint.

\bibitem[Dim14b]{DG2}
\bysame, \emph{Quantum {M}odularity and {C}omplex {C}hern-{S}imons theory},
  2014, Preprint.

\bibitem[Fad95]{Faddeev}
L.~D. Faddeev, \emph{Discrete {H}eisenberg-{W}eyl group and modular group},
  Lett. Math. Phys. \textbf{34} (1995), no.~3, 249--254.

\bibitem[FK94]{FK-QDL}
Ludwig~D. Faddeev and Rinat~M. Kashaev, \emph{Quantum dilogarithm}, Modern
  Phys. Lett. A \textbf{9} (1994), no.~5, 427--434.

\bibitem[GGZ14]{ptolemy}
Stavros Garoufalidis, Matthias Rolf~Dietrich Goerner, Sr, and Christian~K.
  Zickert, \emph{The {P}tolemy field of $3$-manifold-representations}, 2014,
  \eprint{arXiv:1401.5542}, Preprint.

\bibitem[GK]{state-integrals}
Stavros Garoufalidis and Rinat Kashaev, \emph{From state-integrals to
  {$q$}-series}, Math. Research Letters, in press.

\bibitem[GZa]{GZ1}
Stavros Garoufalidis and Don Zagier, \emph{Asymptotics of quantum knot
  invariants}, Preprint 2013.

\bibitem[GZb]{GZ2}
\bysame, \emph{Empirical relations between $q$-series and kashaev's invariant
  of knots}, Preprint 2013.

\bibitem[GZ07]{Goette}
Sebastian Goette and Christian~K. Zickert, \emph{The extended {B}loch group and
  the {C}heeger-{C}hern-{S}imons class}, Geom. Topol. \textbf{11} (2007),
  1623--1635.

\bibitem[Hik01]{Hi}
Kazuhiro Hikami, \emph{Hyperbolic structure arising from a knot invariant},
  Internat. J. Modern Phys. A \textbf{16} (2001), no.~19, 3309--3333.

\bibitem[Kas94]{Kashaev:6j}
R.~M. Kashaev, \emph{Quantum dilogarithm as a {$6j$}-symbol}, Modern Phys.
  Lett. A \textbf{9} (1994), no.~40, 3757--3768.

\bibitem[Kas95]{K95}
Rinat~M. Kashaev, \emph{A link invariant from quantum dilogarithm}, Modern
  Phys. Lett. A \textbf{10} (1995), no.~19, 1409--1418.

\bibitem[Kas97]{K97}
R.~M. Kashaev, \emph{The hyperbolic volume of knots from the quantum
  dilogarithm}, Lett. Math. Phys. \textbf{39} (1997), no.~3, 269--275.

\bibitem[Kas99]{Kashaev:quantum.hyper}
Rinat~M. Kashaev, \emph{Quantum hyperbolic invariants of knots}, Discrete
  integrable geometry and physics ({V}ienna, 1996), Oxford Lecture Ser. Math.
  Appl., vol.~16, Oxford Univ. Press, New York, 1999, pp.~343--359.

\bibitem[KLV12]{KLV}
Rinat~M. Kashaev, Feng Luo, and Grigory Vartanov, \emph{A {TQFT} of
  {T}uraev-{V}iro type on shaped triangulations}, 2012,
  \eprint{arXiv:1210.8393}, Preprint.

\bibitem[KMS93]{Kashaev:star}
R.~M. Kashaev, V.~V. Mangazeev, and Yu.~G. Stroganov, \emph{Star-square and
  tetrahedron equations in the {B}axter-{B}azhanov model}, Internat. J. Modern
  Phys. A \textbf{8} (1993), no.~8, 1399--1409.

\bibitem[Neu04]{Neumann:CCS}
Walter~D. Neumann, \emph{Extended {B}loch group and the
  {C}heeger-{C}hern-{S}imons class}, Geom. Topol. \textbf{8} (2004), 413--474
  (electronic).

\bibitem[ORV06]{ORV}
Andrei Okounkov, Nikolai Reshetikhin, and Cumrun Vafa, \emph{Quantum
  {C}alabi-{Y}au and classical crystals}, The unity of mathematics, Progr.
  Math., vol. 244, Birkh\"auser Boston, Boston, MA, 2006, pp.~597--618.

\bibitem[Zag10]{Za:QMF}
Don Zagier, \emph{Quantum modular forms}, Quanta of maths, Clay Math. Proc.,
  vol.~11, Amer. Math. Soc., Providence, RI, 2010, pp.~659--675.

\end{thebibliography}
\end{document}